\newcommand{\so}{\boldsymbol s}
\newcommand{\ra}{\boldsymbol r}
\newcommand{\mpnr}{M_{\mathcal P,\vec n}(R)}
\newcommand{\mpnz}{M_{\mathcal P,\vec n}(\mathbb Z)}
\newcommand{\mpmr}{M_{\mathcal P,\vec m}(R)}
\newcommand{\glpnr}{\text{GL}_{\mathcal P,\vec n}(R)}
\newcommand{\slpnr}{\text{SL}_{\mathcal P,\vec n}(R)}
\newcommand{\glpnc}{\text{GL}_{\mathcal P,\vec n}(\C)}
\newcommand{\glpmc}{\text{GL}_{\mathcal P,\vec m}(\C)}
\newcommand{\glpnz}{\text{GL}_{\mathcal P,\vec n}(\Z )}
\newcommand{\glpmz}{\text{GL}_{\mathcal P,\vec m}(\Z )}
\newcommand{\glpmq}{\text{GL}_{\mathcal P,\vec m}(\Q )}
\newcommand{\glpnq}{\text{GL}_{\mathcal P,\vec n}(\Q )}
\newcommand{\slpnz}{\text{SL}_{\mathcal P,\vec n}(\Z )}
\newcommand{\slprz}{\text{SL}_{\mathcal P,\vec r}(\Z )}
\newcommand{\slpmz}{\text{SL}_{\mathcal P,\vec m}(\Z )}
\newcommand{\mpmnr}{M_{\mathcal P,\vec m, \vec n}(R)}
\newcommand{\mpnnr}{M_{\mathcal P,\vec n, \vec n}(R)}
\newcommand{\mpmnc}{M_{\mathcal P,\vec m, \vec n}(\C)}
\newcommand{\mpmnz}{M_{\mathcal P,\vec m, \vec n}(\Z )}
\newcommand{\mpmnq}{M_{\mathcal P,\vec m, \vec n}(\Q )}
\newcommand{\glnc}{GL_n(\mathbb C)}
\newcommand{\glmc}{GL_m(\mathbb C)}
\newcommand{\Z}{\mathbb Z}
\newcommand{\Q}{\mathbb Q}
\newcommand{\C}{\mathbb C}
\numberwithin{equation}{section}
\newtheorem{theorem}[equation]{Theorem}
\newtheorem{corollary}[equation]{Corollary}
\newtheorem{lemma}[equation]{Lemma}
\theoremstyle{definition}
\newtheorem{definition}[equation]{Definition}
\newtheorem{remark}[equation]{Remark}
\newtheorem{example}[equation]{Example}
\title[Decidability of flow equivalence and isomorphism problems]{Decidability of flow equivalence and isomorphism problems for graph $C^*$-algebras and quiver representations}
\author{Mike Boyle\and Benjamin Steinberg}
\address{
Department of Mathematics, University of Maryland, College
Park, MD 20742-4015, USA
}
\email{mmb@math.umd.edu}
\thanks{This work was supported  by
  the Danish National Research Foundation through the Centre for Symmetry and Deformation (DNRF92). This work was partially supported by a grant from the Simons Foundation (\#245268 to Benjamin Steinberg). We also thank the anonymous
referee for comments which led to an improved paper.}
\address{
  Department of Mathematics, City College of New York,
  Convent Avenue at 138th Street, New York, New York 10031, USA
}
\email{bsteinberg@ccny.cuny.edu}
\subjclass[2010]{Primary 46L35; Secondary 16G20, 37B10.}
\date{\today}
\keywords{graph C*-algebra, Cuntz-Krieger, stable isomorphism,
  shift of finite type,  flow equivalence, quiver representation,
  diagram isomorphism, decidable}
\begin{document}

\maketitle

\begin{abstract} We note that the deep results of Grunewald and Segal on algorithmic problems for arithmetic groups
  imply the decidability of several matrix equivalence problems involving
  poset-blocked matrices over $\Z$.
  Consequently, results of Eilers, Restorff, Ruiz and
  S{\o}rensen imply that isomorphism and stable isomorphism of unital
  graph   $C^*$-algebras (including the Cuntz-Krieger algebras) are
  decidable. One can also decide flow equivalence for shifts of finite type,
  and isomorphism of $\Z$-quiver representations (i.e.,
  finite diagrams of homomorphisms of
  finitely generated abelian groups).
  \end{abstract}

\section{Introduction}
This paper concerns algorithmic decidability questions in symbolic dynamics and $C^*$-algebras.  Recall that, up to conjugacy, a shift of finite type (SFT) is given by an $n\times n$ $0/1$-matrix $A$.  The corresponding subshift $\mathcal X_A$ consists of all bi-infinite sequences $(x_i)_{i\in \mathbb Z}$ over the alphabet $\{1,\ldots, n\}$  such that $A_{x_i,x_{i+1}}=1$. We recall that two subshifts are flow equivalent if their suspensions (or mapping tori) are conjugate,
modulo a time change, as flows over $\mathbb R$.
One can then ask the natural algorithmic question: given $A$ and $B$ square $0/1$-matrices (of possibly different sizes), decide whether the corresponding subshifts $\mathcal X_A$ and $\mathcal X_B$ are flow equivalent.
Parry and Sullivan \cite{parrysullivan}, and then Bowen and
Franks \cite{BowenFranks}, provided fundamental matrix invariants for this
problem; Franks \cite{jf:fesft} gave complete invariants for the irreducible
case; and Huang (unpublished) found complete matrix
invariants for the general case. A thorough  treatment of complete invariants
was given in \cite{mb:fesftpf,BoyleHuang}, but the question of decidability was left open.
This paper provides the finishing touch on deciding this question.  Note that it is still an open question to decide whether two shifts of finite type are conjugate.  See~\cite{MarcusandLind} for background on symbolic dynamics.

Cuntz and Krieger~\cite{cuntzkrieger}, motivated in part by the study of flow equivalence of shifts of finite type, introduced a very
important class of $C^*$-algebras associated to square nondegenerate
$0/1$-matrices.
These were generalized to another important class,
the class of graph $C^*$-algebras, defined as
follows.\begin{footnote}{Following \cite{errs:complete}, we use
  the notation and definition of \cite{FLR00}, not
  \cite{Raeburn2005}.}\end{footnote}
Let $E=(E^0,E^1)$ be a directed graph,
with vertex set $E^0$ and edge set $E^1$, allowed to be finite or
countably infinite,
and
with source and range maps
$\so,\ra\colon E^1\to E^0$.
The graph $C^*$-algebra $C^*(E)$ is the universal $C^*$-algebra
generated by a set
$\{p_v: v\in E^0\}$
of mutually orthogonal projections
 and
a set $\{s_e: e\in E^1\}$
of partial isometries,
satisfying (for all $e,f$ in $E^1$ and $v$ in $E^0$) the relations
\begin{alignat*}{2}
s_e^*s_e & =p_{\ra(e)}\ ,  \quad \quad &
s_e^*s_f & =0 \quad \text{if } e\neq f \ ,\\
s_es_e^* & \leq p_{\so (e)} \ ,  \quad \quad &
p_v & =\sum_{e \in \so^{-1}(v)}s_es_e^*\quad \text{if }0 < |\so^{-1}(v)| < \infty \ .
\end{alignat*}
The graph $C^*$-algebras isomorphic to Cuntz-Krieger algebras are those
with $E^0$ and $E^1$ finite and with $\so^{-1}(v)\neq \emptyset$ for all $v$.
A graph $C^*$-algebra is unital (i.e., possesses a unit) if and only if
its vertex set $E^0$ (but not necessarily $E^1$) is finite.
When $A,B$ are adjacency matrices of finite
graphs defining Cuntz-Krieger algebras,
flow equivalence of the SFTs defined by $A$ and $B$ implies
stable isomorphism of these algebras.
(Recall that $C^*$-algebras are stably isomorphic
if they become isomorphic upon tensoring with the algebra of compact
operators; for  separable $C^*$-algebras
this is the same as (strong) Morita equivalence in the sense of Rieffel.)
This connection,
made in \cite{cuntzkrieger}, is the heart of a fruitful interaction
between symbolic dynamics and the study of
Cuntz-Krieger algebras. The interaction in the case of general graph
$C^*$-algebras is less direct but still signficant. For much more on
these algebras and their classification, we refer to
the discussion and references of \cite{errs:complete}.


Now, let $(\mathcal P,\preceq)$ be a finite poset.  Without loss of generality, we  assume that $\mathcal P=\{1,\ldots, N\}$ and that $i\preceq j$ implies $i\leq j$.  Let $\vec n=(n_1,\ldots, n_N)$ be an $N$-tuple of positive integers and put $|\vec n|=n_1+\cdots+n_N$.  For any  ring $R$ with unit, define
$M_{\mathcal P,\vec n}(R)$ to be the $R$-subalgebra of $M_{|\vec n|}(R)$ consisting of all $|\vec n|\times |\vec n|$-matrices over $R$ with a block form
\[
M = \begin{pmatrix} M_{1,1} & \cdots & M_{1,N}\\ \vdots & \ddots &\vdots\\ M_{N,1} & \cdots & M_{N,N}\end{pmatrix}\] with each $M_{i,j}$ an $n_i\times n_j$-matrix over $R$ and
such that $M_{i,j}\neq 0$ implies $i\preceq j$; in particular, $M$ is block upper triangular. For example, if each $n_i=1$, then $M_{\mathcal P,\vec n}(R)$ is the usual \emph{incidence algebra} of $\mathcal P$~\cite{Stanley}.

We denote by $\glpnr$  the group of units of
$\mpnr$.
If $R$ is commutative, then $\slpnr$ is
the subgroup of matrices $M$ for which each diagonal block
$M_{ii}$ has determinant $1$.  For a subgroup $\Gamma$ of
$\glpnr$, two matrices $A,B\in M_{\mathcal P,\vec n}(R)$ are said to be
$\Gamma$-equivalent  if there are matrices $U,V\in \Gamma$
with $UAV=B$.

A vast collection of related works
followed the introduction of the Cuntz-Krieger $C^*$-algebras
in \cite{cuntzkrieger}, in particular, including many papers
on  graph $C^*$-algebras (consider the citations of
\cite{Raeburn2005}).
Eventually, Restorff in \cite{Restorff06}
showed that decidability of stable isomorphism for Cuntz-Krieger algebras satisfying Condition II of Cuntz reduces to deciding whether two matrices $A,B\in M_{\mathcal P,\vec n}(\mathbb Z)$ are $GL_{\mathcal P,\vec n}(\Z)$-equivalent.
In related work,
Boyle and Huang reduced the question of deciding flow equivalence for
shifts of finite type to deciding whether two matrices
$A,B\in M_{\mathcal P,\vec n}(\Z)$ are $SL_{\mathcal P,\vec n}(\Z)$-equivalent. 
  (See \cite{mb:fesftpf}; Huang's alternate development was never published.
  We provide additional detail in the Appendix.)
Eilers, Restorff, Ruiz and S{\o}rensen 
in 
  \cite{errs:complete}
 reduced the decidability of stable isomorphism
of unital graph $C^*$-algebras
(a class including the Cuntz-Krieger $C^*$-algebras)  to a  more
general problem of poset blocked equivalence of rectangular
matrices, which we describe later. They also reduced decidability
of isomorphism of unital graph $C^*$-algebras to yet another
matrix equivalence problem.

We shall point out that all these matrix equivalence
problems are decidable, by the
deep results of Grunewald and Segal~\cite{GrunewaldSegal80}, making the work of Borel and Harish-Chandra~\cite{BoHar62} effective.
We also  use Grunewald-Segal \cite{GrunewaldSegal80}
to prove the decidability of
isomorphism of  explicitly given commuting diagrams of
homomorphisms of finitely many finitely generated abelian groups.
 (This can be
interpreted as  decidability of isomorphism of $\Z$-quiver
representations.)
This applies to diagrams arising as
{\it reduced $K$-webs}
 in
the study of flow equivalent SFTs
\cite{BoyleHuang} and to related diagrams of
{\it reduced filtered K-theory} arising in the study of
certain $C^*$-algebras
(e.g. \cite{arr,Restorff06}).

These decidability results,
as proved by appeal to
\cite[Algorithm A]{GrunewaldSegal80},
do not provide practical decision procedures;
the extremely general  Algorithm A
is not even proved to be primitive recursive.

\section{Grunewald and Segal}
Following Grunewald and
Segal~\cite{GrunewaldSegal80},
by a $\mathbb Q$-group
we mean
a subgroup $J$ of $GL_n(\mathbb C)$ (for some $n\geq 1$)
which is the set of common zeros in
$GL_n(\mathbb C)$ of finitely many polynomials, with rational coefficients,
in the $n^2$ matrix entries.
The $\Q$-group is {\it given explicitly} if these polynomials
are explicitly given\begin{footnote}{However, when  such a
  set of polynomials
 is explicitly given, we usually will not write one out.}\end{footnote}.
If $R$ is a subring of $\mathbb C$, then $J_R=GL_n(R)\cap J$.
When there exists  an explicitly given
linear isomorphism, defined over $\Q$, from a $\Q$-group $J$ of complex
matrices onto a $\Q$-group $J'$, which carries $J_{\Z}$ onto
$(J')_{\Z}$, we may avoid mention of the isomorphism and simply
refer to $J$ as a $\Q$-group. For example, if
 $J_1$ and $J_2$ are $\Q$-groups,
then their direct product $J_1\times J_2$ is a $\mathbb Q$-group,
via the embedding $(A,B)\mapsto
\left( \begin{smallmatrix} A&0\\0&B
\end{smallmatrix}\right)$.

A \emph{rational action} of a $\Q$-group $J$ is a homomorphism
$\rho$ from $J$ into the group of permutations
of a subset $W$ of some complex vector
space $\C^m$ such that for each $w\in W$, the
coordinate entries of
the vector $\rho_g(w)$ are rational functions of the $n^2$ entries
of the matrix $g$ as $g$ runs through the identity component $J^0$ of
$J$; and for $w\in W\cap \Q^m$, these rational functions
are ratios of polynomials with
 rational coefficients.
 The action is \emph{explicitly given} if
for each $w\in W\cap \Q^m$,
(i) there is an effective procedure
which produces those coordinate rational functions,  and
(ii) for each $g\in J_{\Z} $ the vector $\rho_g(w)$ is effectively
computable.

By an \emph{arithmetic subgroup} of $J$, we mean a subgroup $\Gamma\leq J_{\Z}$ of finite index (usually, one allows a subgroup commensurable with $J_{\Z}$, but as pointed out in~\cite{GrunewaldSegal80} it is enough to consider finite index subgroups by performing a rational change of basis).  If $J$ is an explicitly given $\mathbb Q$-group, following Grunewald and Segal, we say that the arithmetic subgroup $\Gamma$ is \emph{explicitly given} if an upper bound on the index of $\Gamma$ in $J_\Z$ is given and an effective procedure is given to decide, for each $g\in J_\Z$, whether or not $g\in \Gamma$.  Most of this paper will only consider $\Gamma=J_\Z$, with the exception of Lemma~\ref{cokernellemma}.


The following stunning result is due to
Grunewald and
Segal~\cite[Algorithm~A]{GrunewaldSegal80}.


\begin{theorem} [Grunewald/Segal]\label{grunewaldsegal}
  Let $J$ be an explicitly given $\Q$-group and $\rho$ an explicitly
  given rational action of $J$ on a subset $W$ of $\C^m$. Let $\Gamma$ be an explicitly given arithmetic subgroup of $J$ (e.g., $\Gamma=J_\Z$).
  There is an algorithm, which
given vectors $v,w\in W\cap \mathbb Q^m$, decides
whether there exists $g\in \Gamma$ such that $\rho_g(v)=w$ (and
produces such a $g$, when one exists).
\end{theorem}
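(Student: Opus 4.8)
The plan is to reduce the orbit problem to effective forms of the reduction theory of arithmetic groups. We may assume throughout that $J$ is connected and that $\Gamma=J_{\Z}$: reducing to the identity component (via the finitely many components) and reducing a general finite-index $\Gamma$ to $J_{\Z}$ (via the supplied index bound and membership test) are both routine enumerations of coset representatives, each of which merely shifts the target vector by applying $\rho$. Since $\rho$ is rational, the fibre $F=\{g\in J:\rho_{g}(v)=w\}$ is an explicitly computable $\Q$-subvariety of $J\subseteq GL_{n}(\C)$: clear the denominators coming from $\rho$, write down the resulting polynomial equations in the entries of $g$, and apply effective elimination (a Gr\"obner basis computation) to decide whether $F(\C)=\varnothing$, in which case the answer is ``no''. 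When $F$ is nonempty it is a torsor under the stabilizer $H:=J_{v}$, itself an explicitly given $\Q$-subgroup of $J$ (cut out by $\rho_{g}(v)=v$), and the orbit problem becomes: \emph{decide whether the explicitly given $\Q$-variety $F$ contains an integral point $\gamma\in J_{\Z}$} (and exhibit one when it does), equivalently whether the transporter from $v$ to $w$ can be realized inside $J_{\Z}$.

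The substance is an effective search bound for this last question. The qualitative input is the finiteness theorem of Borel and Harish-Chandra~\cite{BoHar62}: the $J_{\Z}$-orbits lying inside a single $J$-orbit and meeting a fixed $J_{\Z}$-stable lattice are finite in number. Making this quantitative---producing, from $v$ and $w$, an explicit bound $B$ such that $w\in J_{\Z}v$ forces $\rho_{\gamma}(v)=w$ for some $\gamma\in J_{\Z}$ of height at most $B$---is the heart of the matter, and I would carry it out by induction on $\dim J$ along the Levi decomposition $J=M\ltimes U$, $U$ the unipotent radical and $M$ reductive. For $U$ unipotent one works in Mal'cev/Hall coordinates, where the group law and the action are polynomial over $\Q$, torsors under a unipotent $\Q$-group are automatically trivial, and the integral transporter problem becomes the integer solvability of an explicit polynomial system within an effectively bounded box. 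For $M$ a $\Q$-torus one presents $M$ via Weil restrictions of scalars, converting the problem into one about $S$-units and ideal classes of the associated number fields; these are controlled by effective versions of Dirichlet's unit theorem and of the finiteness of the class number (Minkowski-style geometry-of-numbers bounds), and this is also the step that decides whether $F$ has a rational, and then an integral, point at all. For $M$ semisimple one applies reduction theory proper: Siegel sets furnish an explicit fundamental set for $J_{\Z}$ acting on $M_{\R}$, and effective estimates for their shape bound the height of a transporter in terms of the input.

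Assembling the three cases yields the bound $B=B(v,w)$, after which one simply searches the finitely many $\gamma\in J_{\Z}$ of height at most $B$---using the effective enumeration of such $\gamma$ and, for a general $\Gamma$, the membership test---and reports ``yes'' together with a witnessing $g$ if one is found, and ``no'' otherwise. The main obstacle is precisely this effective reduction theory: turning the qualitative finiteness of $J_{\Z}$-orbits into explicit, uniformly controlled height bounds across the whole inductive decomposition of $J$, and in particular pinning down the constants buried in Siegel-set volume estimates and in the effective arithmetic of tori and their torsors. Carrying this out in full generality is essentially the entire content of Grunewald and Segal's \cite[Algorithm~A]{GrunewaldSegal80}; it is also why, as remarked in the introduction, the resulting decision procedure is of no practical use and is not even known to be primitive recursive.
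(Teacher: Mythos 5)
The paper does not actually prove this theorem: it is imported verbatim as \cite[Algorithm A]{GrunewaldSegal80}, and its proof occupies a substantial part of that paper. Your proposal must therefore be measured against the genuine Grunewald--Segal argument, and as a proof it has a real gap. The reductions in your first paragraph (to $J$ connected, to $\Gamma=J_\Z$, and to deciding whether the transporter variety $F$ --- a torsor under the stabilizer $J_v$ --- meets $J_\Z$) are fine and match the setup of the real proof. But everything afterwards rests on the claimed ``explicit bound $B(v,w)$'' such that a transporter exists in $J_\Z$ if and only if one exists of height at most $B$, and this bound is asserted rather than established. Naming Levi decompositions, Mal'cev coordinates, Siegel sets, and effective unit and class-number theorems does not produce it: effective reduction theory with computable constants in this generality is precisely what is \emph{not} available, and extracting something usable in its place is the content of the theorem, not a routine assembly. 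Your closing sentence concedes that this step ``is essentially the entire content'' of \cite{GrunewaldSegal80}, which is to say the core of the proof is deferred to the result being proved.

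Two concrete symptoms show the strategy as described cannot be right. First, in the unipotent case you reduce to ``the integer solvability of an explicit polynomial system within an effectively bounded box''; absent the box, integer solvability of polynomial systems is Hilbert's tenth problem and undecidable, so the box is doing all the work and you have not produced it (the genuine argument for unipotent groups works up the central series via the Mal'cev correspondence, reducing to linear problems over $\Z$ at each stage, not to a bounded brute-force search). Second, a procedure of the form ``compute an explicit geometry-of-numbers height bound from the input, then search up to it'' would be primitive recursive (indeed elementary), whereas --- as the paper notes at the end of the introduction --- Algorithm A is not even proved to be primitive recursive. The actual algorithm does not proceed from an a priori computable search bound; termination is guaranteed by non-effective finiteness theorems in the spirit of Borel--Harish-Chandra \cite{BoHar62}, while the work goes into making an unbounded search correct. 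So the architecture you describe is not merely incomplete; it would prove something stronger than is known to be true of the theorem.
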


\begin{remark}
It is important to note that, implicit in~\cite[Algorithm A]{GrunewaldSegal80},  is that $J$, $\rho$ and $\Gamma$ should be considered part of the input (this is the point of $J$ and $\Gamma$ being ``explicitly given''), and not just the vectors $v,w\in W\cap \mathbb Q^m$, despite the wording of Theorem~\ref{grunewaldsegal} (which mimics that of~\cite[Algorithm A]{GrunewaldSegal80} and seems to imply that they are fixed).  For instance, Grunewald and Segal use that the group and the action are part of the input in~\cite[Corollaries~3 and 4]{GrunewaldSegal80}.  In our applications to shifts of finite type and graph $C^*$-algebras,  the particular $J$, $\rho$ and $\Gamma$ used will be dependent on the input to our decidability questions.
\end{remark}

Observe that if $\mathcal P$ is a finite poset as above, then
$GL_{\mathcal P,\vec n}(\C )$ and $SL_{\mathcal P,\vec n}(\C )$ are $\mathbb
Q$-groups.  Indeed, $GL_{\mathcal P,\vec n}(\C )$ is the subgroup of
$GL_{|\vec n|}(\C )$ defined by the polynomials over $\mathbb Z$ saying
that an entry belonging to $M_{i,j}$ with $i\npreceq j$ is $0$.  The
subgroup $SL_{\mathcal P,\vec n}(\C)$ is defined by the additional
equations stating that each diagonal block has determinant
$1$.

We let the $\Q$-group
$J=GL_{\mathcal P,\vec n}(\C)\times GL_{\mathcal P,\vec n}(\C)$
act on the $\C$ vector space
 $M_{\mathcal P,\vec n}(\C)$ by $(U,V): A \mapsto UAV^{-1}$;
this
action
is a rational action of   $J$.
 This action restricts to
 an action of $SL_{\mathcal P,\vec n}(\C )\times  SL_{\mathcal P,\vec n}(\C )$.
 Given $A$ in  $M_{\mathcal P,\vec n}(\Q)$,
 the polynomials with rational coefficients
 which compute the entries of $UAV^{-1}$ for $(U,V)$ in
$J$ can be effectively computed
%
 from $\mathcal P$ and $\vec n$. We immediately obtain the following
 corollary of Theorem~\ref{grunewaldsegal}.

%
\begin{corollary}\label{decidecor}
  Given a finite poset $\mathcal P$, a vector $\vec n$ of positive integers and matrices $A,B\in M_{\mathcal P,\vec n}(\mathbb Q)$, one can decide whether
  $A,B$ are $GL_{\mathcal P, \vec n}(\Z) $-equivalent and whether they are
  $SL_{\mathcal P, \vec n}(\Z)$-equivalent.
\end{corollary}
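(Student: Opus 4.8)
The plan is to deduce this directly from Theorem~\ref{grunewaldsegal}. For the $GL$ statement I take the $\Q$-group to be $J=\glpnc\times\glpnc$, the action to be the rational action $\rho_{(U,V)}(M)=UMV^{-1}$ of $J$ on the $\C$-vector space $M_{\mathcal P,\vec n}(\C)$ (a subspace of some $\C^m$, say $m=|\vec n|^2$), and the arithmetic subgroup to be $\Gamma=J_\Z=\glpnz\times\glpnz$. For the $SL$ statement I replace $J$ by $\slpnc\times\slpnc$ and $\Gamma$ by $\slpnz\times\slpnz$.

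First I would check that these ingredients are ``explicitly given'' in the sense required by Theorem~\ref{grunewaldsegal}, with $\mathcal P$ and $\vec n$ treated as input. As already noted in the text, $\glpnc$ is cut out of $GL_{|\vec n|}(\C)$ by the (explicitly listable) equations setting to zero the entries of the blocks $M_{i,j}$ with $i\npreceq j$, and $\slpnc$ by the additional determinant-one equations on the diagonal blocks; a direct product of explicitly given $\Q$-groups is again one via the block-diagonal embedding recalled above. Since $M_{\mathcal P,\vec n}(\C)$ is a subalgebra of $M_{|\vec n|}(\C)$ whose unit group is $\glpnc$, the product $UMV^{-1}$ again lies in $M_{\mathcal P,\vec n}(\C)$, and its entries are obtained from those of $U$, $M$ and $V$ by matrix multiplication together with Cramer's rule for $V^{-1}$; these are rational functions of the entries of $(U,V)$ whose coefficients are polynomial in the entries of $M$, hence have rational coefficients when $M\in M_{\mathcal P,\vec n}(\Q)$. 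Both these functions and the values $\rho_g(M)$ for $g\in J_\Z$ are effectively computable, so $\rho$ is an explicitly given rational action; and $\Gamma=J_\Z$ is admissible as an explicitly given arithmetic subgroup (trivial index bound, decidable membership), as Theorem~\ref{grunewaldsegal} explicitly allows.

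Then I would observe that since $\Gamma$ is a group, two matrices $A,B\in M_{\mathcal P,\vec n}(\Q)$ are $\glpnz$-equivalent --- that is, $UAV=B$ for some $U,V$ in the group --- if and only if $\rho_{(U,V)}(A)=UAV^{-1}=B$ for some $(U,V)\in\Gamma$; the same equivalence holds verbatim for $\slpnz$-equivalence. Feeding $\mathcal P$ and $\vec n$ (hence $J$, $\rho$, $\Gamma$) together with the vectors $v=A$ and $w=B$ of $W\cap\Q^m$ into the algorithm of Theorem~\ref{grunewaldsegal} therefore decides each of the two questions, and moreover exhibits a witnessing pair $(U,V)$ whenever one exists.

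The only point needing any care is the verification that $J$, $\rho$ and $\Gamma$ meet the ``explicitly given'' hypotheses, so that all of $\mathcal P$, $\vec n$, $A$, $B$ may legitimately be passed to Algorithm~A as input; there is no genuine obstacle, as the entire weight of the argument rests on the Grunewald--Segal theorem.
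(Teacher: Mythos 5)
Your proposal is correct and follows the paper's own route: the paper likewise lets the $\Q$-group $J=\glpnc\times\glpnc$ (resp.\ its $\SL$ analogue) act rationally on $M_{\mathcal P,\vec n}(\C)$ by $(U,V)\colon A\mapsto UAV^{-1}$, notes that the defining polynomials and the action are effectively computable from $\mathcal P$ and $\vec n$, and invokes Theorem~\ref{grunewaldsegal} with $\Gamma=J_\Z$. Your extra remark that $UAV=B$ is equivalent to $UAV^{-1}=B$ for some group element is the same observation the paper records separately (in Corollary~\ref{opremark}) for the rectangular setting.
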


As noted in the introduction,
Corollary \ref{decidecor} combines with the works
\cite{mb:fesftpf,Restorff06}  to give the following.

\begin{corollary}\label{fedecidable}
Flow equivalence is decidable for shifts of finite type.
\end{corollary}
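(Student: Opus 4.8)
The plan is to combine the reduction of Boyle and Huang with Corollary~\ref{decidecor}, so that the real content is to confirm that every step of that reduction is effective. An SFT is presented by a square $0/1$-matrix $A$, and Boyle and Huang (see \cite{mb:fesftpf}, with the Appendix supplying details) provide a complete system of invariants for flow equivalence which, after a preliminary combinatorial normalization, takes the form of $\slpnz$-equivalence of two poset-blocked integer matrices in $\mpnz$ built algorithmically from $A$ and $B$. Once these matrices are in hand, Corollary~\ref{decidecor} decides their $\slpnz$-equivalence, and hence decides flow equivalence of $\mathcal X_A$ and $\mathcal X_B$.

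First I would dispose of the combinatorial skeleton. From $A$, and likewise from $B$, compute the strongly connected components of the underlying digraph (these are the irreducible components of the SFT) together with the partial order induced by reachability; this is standard linear-time graph theory. Mark each component trivial (a single periodic orbit) or non-trivial. Search over the finitely many bijections between the two sets of components for one that is an order-isomorphism respecting these marks; if none exists, output ``not flow equivalent''. Otherwise fix one, producing a common finite poset $\mathcal P$ and block-size vector $\vec n$.

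Next I would assemble the matrices. On the non-trivial components one records $I-A$, respectively $I-B$, in its natural $\mathcal P$-block form; trivial components contribute a standard block; and one performs the stabilization of diagonal blocks (the Boyle-Huang normal form) needed to land both matrices in a common $\mpnz$. The use of $SL$ rather than $GL$ encodes the determinant-sign refinement already visible in Franks' theorem for the irreducible case. By the Boyle-Huang classification, $\mathcal X_A$ and $\mathcal X_B$ are flow equivalent if and only if the two resulting matrices are $\slpnz$-equivalent, which Corollary~\ref{decidecor} settles.

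The main obstacle is not any single computation but the bookkeeping for reducible SFTs: one must check that matching the component structure is a finite search, that the normalization to a single pair in a common $\mpnz$ can be carried out effectively (in particular, that the needed stabilization is bounded, or else that finitely many instances of the $\slpnz$-equivalence test suffice), and that the resulting invariant is genuinely complete. Extracting from \cite{mb:fesftpf} a clean, fully effective reduction of this kind is exactly the purpose of the Appendix.
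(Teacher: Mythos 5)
Your overall route is the same as the paper's: invoke the reduction of \cite{mb:fesftpf} (via \cite{BoyleHuang}) of flow equivalence to $\slpnz$-equivalence of poset-blocked integer matrices, and then apply Corollary~\ref{decidecor}. But there is one genuine gap, and it sits exactly at the point you flag as ``the main obstacle'' and then defer: the reduction in \cite{mb:fesftpf} is stated in terms of stabilized (in effect, infinite) matrices, so what it literally gives you is the condition ``there \emph{exists} a block-size vector $\vec r$ such that $\iota_{\vec r}B$ and $\iota_{\vec r}B'$ are $\slprz$-equivalent.'' As stated this is an existential quantifier over infinitely many sizes: you can confirm a positive answer by searching, but you cannot certify a negative one, so you do not yet have a decision procedure. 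Saying that the stabilization ``is bounded, or else that finitely many instances suffice'' is precisely the claim that needs proof, and your proposal does not supply it.

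The paper closes this gap in the Appendix as follows. Define $\vec n$ by $n_i=1$ when $m_i=1$ and $n_i=2+\max\{m_i,m_i'\}$ otherwise; the claim is that the existential condition holds if and only if it holds for $\vec r=\vec n$, so a \emph{single} application of Corollary~\ref{decidecor} decides the question. The nontrivial direction is a descent: if $\iota_{\vec r}B$ and $\iota_{\vec r}B'$ are $\slprz$-equivalent for some $\vec r\geq\vec n$, then because each enlarged diagonal block of $\iota_{\vec n}B$ contains an embedded identity entry, its entries have greatest common divisor $1$, and the stabilization result \cite[Corollary~4.11]{BoyleHuang} pulls the equivalence down to size $\vec n$. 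The extra ``$+2$'' in the definition of $\vec n$ is also what guarantees that the Factorization Theorem \cite[Theorem~4.4]{mb:fesftpf} applies, converting the $\slpnz$-equivalence back into the positive equivalence that actually characterizes flow equivalence --- a step your phrase ``flow equivalent if and only if $\slpnz$-equivalent'' quietly elides. Your combinatorial preprocessing (matching components, posets, and trivial/nontrivial marks by finite search) is fine and consistent with the paper; the missing content is the explicit bound $\vec n$ and the descent argument justifying it.
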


\begin{corollary} \label{stableisockdecidable}
Stable isomorphism is decidable for Cuntz-Krieger algebras satisfying
Cuntz's condition II.
\end{corollary}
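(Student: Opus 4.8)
The plan is to combine Restorff's reduction \cite{Restorff06} with Corollary~\ref{decidecor}, exactly as Corollary~\ref{fedecidable} combines that corollary with \cite{mb:fesftpf,Restorff06}. Restorff showed that Cuntz--Krieger algebras satisfying Cuntz's Condition~(II) are classified up to stable isomorphism by their ordered filtered $K$-theory, and that for a pair of such algebras this invariant can be repackaged as a finite poset $\mathcal P$, a block vector $\vec n$ of positive integers, and a pair of matrices $A,B\in\mpnz$, so that the two algebras are stably isomorphic if and only if $A$ and $B$ are $\glpnz$-equivalent. Granting this, the corollary is immediate from Corollary~\ref{decidecor}, \emph{provided} that the passage from the defining data of the algebras to the triple $(\mathcal P,\vec n,A,B)$ is effective; verifying this effectivity is the only real content of the argument.

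To that end, recall that a Cuntz--Krieger algebra satisfying Condition~(II) is presented by a finite square $0/1$-matrix $C$ with no zero row. From $C$ one computes, by elementary graph algorithms, the finite poset $\mathcal P$ underlying the filtered $K$-theory — built from the strongly connected components of the graph of $C$ and their reachability order — and the block vector $\vec n$; and, after ordering the vertices compatibly with the order on components, the integer matrix $I-C^{t}$, which records the $K_0$/Bowen--Franks data of the subquotients in the normalization of \cite{Restorff06}, lies in $\mpnz$. Thus each input algebra effectively yields such a triple. A little bookkeeping is needed because the two inputs may a priori carry non-isomorphic discrete data: finite posets can be tested for isomorphism effectively, and the block vectors together with the ranks and torsion invariants of the subquotient $K$-groups are computable, so if these fail to match, the algorithm outputs ``not stably isomorphic''; otherwise one fixes an identification of the indexing data, transports $B$ into the same $\mpnz$ as $A$, and applies Corollary~\ref{decidecor} to decide $\glpnz$-equivalence, which by Restorff's classification is exactly the stable-isomorphism question.

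The step I expect to be the main obstacle is not any of these computations but the faithful import of \cite{Restorff06}: one must be sure that Restorff's theorem is a genuine ``if and only if'' in which, after the normalization above, the pertinent equivalence relation is \emph{exactly} $\glpnz$-equivalence of the associated integer matrices — not merely a necessary condition, and not an equivalence modulo some larger class of moves — and that the reduction to this relation is effective. Once that is granted, the corollary follows at once from Corollary~\ref{decidecor}.
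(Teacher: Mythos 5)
Your proposal is correct and follows the same route as the paper: the paper's entire proof is the one-line observation that Restorff's reduction of stable isomorphism (under Condition~II) to $\glpnz$-equivalence of poset-blocked integer matrices combines with Corollary~\ref{decidecor}. The extra detail you supply about the effectivity of extracting $(\mathcal P,\vec n,A,B)$ from the defining $0/1$-matrices is a reasonable elaboration of what the paper leaves implicit, not a different argument.
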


\section{Rectangular matrices}

We now consider poset blocked matrices with a
rectangular structure. This is natural, and necessary
for showing that the work of \cite{errs:complete}
implies  general  decidability results  for unital
graph $C^*$-algebras.
The adjacency matrices for these (directed) graphs have only finitely many
vertices, but may have 
countably many edges; the analysis of their
adjacency matrices (with \lq\lq $\infty$\rq\rq\ an allowed entry) is reduced
in \cite{errs:complete}
to the analysis of associated rectangular matrices with integer entries.

We will use (and slightly augment) notations from \cite{errs:complete}.
Take $\mathcal P$ and $N$ as above.
Let  $\vec m$ and $\vec n$ be nonnegative elements of $\Z^N$.  Set
$\mathcal I =\{ i\colon m_i > 0\}$,
$\mathcal J =\{ j\colon n_j > 0\}$, $m=\sum m_i$, $n=\sum n_j$.
We impose the nontriviality requirement that  $\mathcal I $ and $\mathcal J$
are nonempty.
For $R$ a subring  of $\C$,
define $\mpmnr$ to be the set of $m\times n$ matrices
with $\mathcal I \times \mathcal J$ block form, with
$M_{ij}$ an $m_i \times n_j$ matrix over $R$
such that $M_{i,j}\neq 0$ implies $i\preceq j$.
(As in \cite{errs:complete}, $m_i=0$ can be viewed as
producing an empty block row indexed by $i$, and similarly
$n_j=0$ corresponds to an empty block column.)
$\mathcal I$ and $\mathcal J$ are posets, with the order inherited
from $\mathcal P$.

Given a tuple
 $\vec n$ 
over $\Z_+$, with $\mathcal J$ indexing the
indices $j$ at which $n_j>0$, and with $\mathcal J$ nonempty,
we let $\mpnr$ denote $\mpnnr$.
If all entries of
 $\vec n$
 are positive, then
this agrees with $\mpnr$ as defined earlier;
in general, $\mpnr$ is the set of matrices over $R$ with
$\mathcal J \times \mathcal J$ block structure corresponding to the
poset $\mathcal J$ and the associated positive entries of
 $\vec n$.
Let $\glpnr$ be the group of units of
$\mpnr$, with $\slpnr$ its  subgroup
of matrices $M$ such that for $n_j >0$,
we have $\det M_{jj}=1$.

\begin{example}
Let $\mathcal P =\{1,2,3,4,5\}$ be the
poset such that
$i\preceq j$ iff $i=j$, $i=1$ or  $(i,j) \in
\{
(2,5),(3,4)\}$.
Let
$\vec m = (1,0,1,0,1)$ and $ \vec n = (1,1,0,1,1)$. Then
$\mathcal I = \{1,3,5\}$ and $ \mathcal J = \{ 1,2,4,5\} $.
We display some general matrix forms:
\begin{alignat*}{3}
  & \ \  \slpmz  \quad\quad
  &&
  \quad \quad
  \mpmnz \quad\quad \quad
  && \quad \quad \glpnz  \\
& \begin{pmatrix}
  1 & * & *  \\
  0 & 1 & 0  \\
  0 & 0 & 1
\end{pmatrix}
&&
\quad
\begin{pmatrix}
  * & * & * & * \\
  0 & 0 & * & 0 \\
  0 & 0 & 0 & *
\end{pmatrix}
&&
\begin{pmatrix}
\pm  1 & * & * & * \\
0 & \pm 1 & 0 & * \\
0&  0 & \pm 1 & 0  \\
0&  0 & 0 & \pm 1
\end{pmatrix}
\end{alignat*}
in which each $*$ is an arbitrary entry from $\Z$. $\qed$
\end{example}

If $U\in \mpmr$, $M\in \mpmnr$ and
$V\in \mpnr$, then $UMV\in \mpmnr$.
The rule
$\rho_{(U,V)}\colon M \mapsto UMV^{-1}$
defines an action of the
 $\Q$-group
$\glpmc \times \glpnc$
on
$\mpmnc$.
This is an explicitly given rational action.
The next result follows immediately from
Theorem \ref{grunewaldsegal}.
\begin{corollary}\label{Hactiontheorem}
  Suppose $H$ is an explicitly given  $\Q$-group and $H$ is a subgroup of
  $\glpmc \times \glpnc$ (given by an explicit embedding defined over $\Q$).
  Then given matrices $A,B$ in
  $\mpmnq$,
    there is an algorithm which decides
 whether there exists $(U,V)$ in $H$ such that
  $UAV^{-1}=B$ (and produces such a $(U,V)$, when one exists).
  \end{corollary}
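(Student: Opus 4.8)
The plan is to read the statement off from Theorem~\ref{grunewaldsegal} by checking that its hypotheses are met for a suitable choice of data. I would take the $\Q$-group of that theorem to be $J=H$, which is explicitly given by hypothesis; the ambient space of the action to be $W=\mpmnc$, regarded as a $\C$-linear subspace of $\C^{mn}$; and the arithmetic subgroup to be the canonical choice $\Gamma=J_\Z$, which Theorem~\ref{grunewaldsegal} explicitly permits. With these choices the given matrices $A,B\in\mpmnq$ are exactly admissible inputs, since $\mpmnq=W\cap\Q^{mn}$.

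Next I would exhibit the required explicitly given rational action of $J=H$ on $W$, namely the restriction of $\rho_{(U,V)}\colon M\mapsto UMV^{-1}$. The discussion preceding the corollary already records that this formula is an explicitly given rational action of the larger $\Q$-group $\glpmc\times\glpnc$ on $\mpmnc$: it preserves $\mpmnc$, it is a group action hence a homomorphism into the permutations of $W$, and for each fixed $M\in\mpmnq$ the entries of $UMV^{-1}$ are ratios of polynomials with rational coefficients in the entries of $U$ and $V$ (via the adjugate formula for $V^{-1}$), effectively computable from $\mathcal P,\vec m,\vec n,M$, with integral output on integral input. Restricting to the subgroup $H$ leaves the orbit map itself unchanged; the only adjustment is to precompose those coordinate rational functions with the explicit $\Q$-linear embedding $H\hookrightarrow\glpmc\times\glpnc$, which again yields ratios of polynomials with rational coefficients and preserves effectivity. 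Hence $\rho$ restricts to an explicitly given rational action of $H$ on $W$.

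Finally, applying Theorem~\ref{grunewaldsegal} to $J=H$, this action $\rho$, the arithmetic subgroup $\Gamma=J_\Z$, and the inputs $v=A$, $w=B$ yields an algorithm that decides whether some $(U,V)\in\Gamma=H_\Z$ satisfies $\rho_{(U,V)}(A)=UAV^{-1}=B$, and produces such a pair when one exists; this is the assertion. The corollary is therefore immediate, and the only point that genuinely needs checking is the one in the previous paragraph: that the explicitly given rational action of $\glpmc\times\glpnc$ on $\mpmnc$ restricts, in the precise effectivity sense of Grunewald and Segal, to an explicitly given rational action of $H$. This is exactly where the hypothesis that the embedding of $H$ into $\glpmc\times\glpnc$ is explicitly given and defined over $\Q$ is used, and it is the only mild obstacle I anticipate.
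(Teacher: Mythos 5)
Your proposal is correct and matches the paper, which states exactly this corollary as an immediate consequence of Theorem~\ref{grunewaldsegal} after observing that $(U,V)\colon M\mapsto UMV^{-1}$ is an explicitly given rational action of $\glpmc\times\glpnc$ on $\mpmnc$; you have simply filled in the routine verifications (restriction to $H$ via the explicit $\Q$-embedding, $\Gamma=H_\Z$, and $\mpmnq=W\cap\Q^{mn}$) that the paper leaves implicit.
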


For clarity, we next address a minor point directly.

\begin{corollary}\label{opremark}
    Suppose $H$ is an explicitly given  $\Q$-group and $H$ is a subgroup of
  $\glpmc \times \glpnc$ (given by an explicit embedding defined over $\Q$).
  Then given matrices $A,B$ in
  $\mpmnq$,
  there is an algorithm which decides
 whether there exists $(U,V)$ in $H$ such that
  $UAV=B$ (and produces such a $(U,V)$, when one exists).
\end{corollary}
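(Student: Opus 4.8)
**The plan is to reduce Corollary~\ref{opremark} to Corollary~\ref{Hactiontheorem} by a change of variables that converts the equation $UAV = B$ into an equation of the form $U A' (V')^{-1} = B$.** The only difference between the two statements is that one asks for $UAV^{-1} = B$ and the other for $UAV = B$. The natural move is to replace the right factor $V$ by its inverse $V^{-1} =: V'$, so that $UAV = B$ becomes $U A (V')^{-1} = B$. The subtlety is that $V \mapsto V^{-1}$ is not a polynomial map, so we must verify that this substitution still lands us inside the framework of Theorem~\ref{grunewaldsegal}; equivalently, that the subgroup of pairs $(U, V')$ arising this way is still an explicitly given $\Q$-group with an explicitly given rational action.

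First I would introduce the group isomorphism $\sigma \colon \glpmc \times \glpnc \to \glpmc \times \glpnc$ given by $\sigma(U,V) = (U, V^{-1})$. This is an automorphism of the product group, and crucially it is a \emph{rational} map: inversion in $\glpnc$ is a rational function of the matrix entries (by Cramer's rule, since on each diagonal block the determinant is a nonzero constant on the identity component, and more generally the inverse exists and its entries are ratios of polynomials in the entries of the original matrix), and it carries $\Z$-points to $\Z$-points because $\glpnz$ is closed under inversion. Hence $\sigma$ restricts to a map carrying $H$ to $\sigma(H) =: H'$, and $H'$ is again an explicitly given $\Q$-group, sitting inside $\glpmc \times \glpnc$ via an explicit embedding defined over $\Q$ (one simply composes the given embedding of $H$ with $\sigma$, and all the relevant polynomials are effectively computable from those defining $H$).

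Next I would observe that under this correspondence, for $(U,V) \in H$ and $(U, V') = \sigma(U,V) \in H'$, we have $UAV = B$ if and only if $U A (V')^{-1} = B$. Thus there exists $(U,V) \in H$ with $UAV = B$ precisely when there exists $(U, V') \in H'$ with $\rho_{(U,V')}(A) = B$, where $\rho$ is the action $M \mapsto U M (V')^{-1}$ of $\glpmc \times \glpnc$ on $\mpmnc$ from the discussion preceding Corollary~\ref{Hactiontheorem}. Applying Corollary~\ref{Hactiontheorem} to the $\Q$-group $H'$ and the matrices $A, B \in \mpmnq$ gives an algorithm deciding whether such a $(U,V')$ exists, and producing one when it does; composing the output with $\sigma^{-1} = \sigma$ then recovers the desired $(U,V) \in H$.

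\textbf{The main obstacle to be careful about is the bookkeeping around ``explicitly given.''} We must confirm that $\sigma$ is explicitly given in the technical sense required: that the defining polynomials of $H'$ and the embedding data can be \emph{effectively} produced from those of $H$, and that evaluation of $\rho$ on $\Z$-points of $H'$ remains effectively computable. Both follow from the fact that inversion in $\glpnc$ is given by an explicit formula (determinant of each diagonal block is $\pm 1$ for $\slpnc$, or a unit otherwise; in general the entries of $V^{-1}$ are polynomials in the entries of $V$ over $\Z$ up to dividing by the product of diagonal-block determinants, which is a unit), so no genuinely new difficulty arises — the work is entirely in checking that nothing in the reduction breaks effectivity. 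Since this is the same inversion map already implicitly used to define the action $M \mapsto UMV^{-1}$ in the first place, the verification is essentially immediate.
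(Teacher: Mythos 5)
Your proposal is correct and follows essentially the same route as the paper: the paper likewise defines $H^{\circ}$ as the image of $H$ under $(U,V)\mapsto (U,V^{-1})$, notes that $H^{\circ}$ is still an explicitly given $\Q$-group in $\glpmc\times\glpnc$, and applies Corollary~\ref{Hactiontheorem} to $H^{\circ}$. Your additional remarks on the effectivity of the inversion map are just a more verbose version of the paper's one-line observation.
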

\begin{proof}
Let
  $H^{\circ}$ be the image of $H$ under the map $(U,V)\mapsto (U,V^{-1})$.
  The following are equivalent:
  (i) there exists $(U,V)$ in $H_{\Z}$
   with $UAV=B$;
  (ii)  there exists $(U,V)$ in $(H^{\circ})_{\Z}$
with $UAV^{-1}=B$.
  Even if $H\neq H^{\circ}$, the group $H^{\circ}$ is an explicitly
  given $\Q$-group in   $\glpmc \times \glpnc$.
  Corollary~\ref{Hactiontheorem} applies with $H^{\circ}$ in
  place of $H$, and this decides (ii).
\end{proof}

%
%
%


Eilers, Restorff, Ruiz and S{\o}rensen reduced the
problem of deciding
stable isomorphism of
two unital graph
$C^*$-algebras to
the problem of deciding, given $A,B$ in
$\mpmnz$, whether there exists $(U,V)$ in
  $ \glpmz \times \glpnz$, such that 
$V\{i\}$ (the $i$th
  diagonal block of $V$) equals $1$ whenever
  $n_i=1$, and 
$UAV=B$. (See \cite[Corollary 14.3]{errs:complete}) for this reduction.) 
 Because 
\[
    \{ (U,V) \in \glpmq \times \glpnq\colon
    n_i=1 \implies V\{ i\} =1
    \}  
\]
is a 
$\Q$-group in which the 
allowed $(U,V)$ form an explicitly given arithmetic
    subgroup, 
  their work
  implies the following result (which 
    \cite[Corollary 14.3]{errs:complete} states in terms of Morita equivalence). 

\begin{theorem} Stable isomorphism  of unital graph $C^*$-algebras is
  decidable.
\end{theorem}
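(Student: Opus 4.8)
The plan is to assemble the statement directly from \cite[Corollary 14.3]{errs:complete} together with Corollary~\ref{opremark}. First I would recall the reduction from \cite{errs:complete}: stable isomorphism (equivalently, Morita equivalence) of two unital graph $C^*$-algebras is equivalent to a combinatorial condition on a pair of associated poset-blocked integer matrices $A,B\in\mpmnz$, namely the existence of $(U,V)\in \glpmz\times\glpnz$ with $V\{i\}=1$ whenever $n_i=1$ and with $UAV=B$. The input data (the graphs) effectively determine $\mathcal P$, the vectors $\vec m,\vec n$, and the matrices $A,B$; so the decidability question reduces to deciding this matrix equivalence.

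Next I would verify the two hypotheses needed to invoke Corollary~\ref{opremark} in this setting. Set
\[
H=\{(U,V)\in\glpmc\times\glpnc\colon n_i=1\implies V\{i\}=1\}.
\]
This is cut out inside $\glpmc\times\glpnc$ by the polynomial (in fact linear) equations $V\{i\}=1$ for those $i$ with $n_i=1$; these equations have integer coefficients and can be written down effectively from $\mathcal P$ and $\vec n$, so $H$ is an explicitly given $\Q$-group, given by an explicit embedding over $\Q$ into $\glpmc\times\glpnc$. Moreover $H_\Z$ is precisely the set of allowed integer pairs $(U,V)$ appearing in the reduction, and it is an (explicitly given) arithmetic subgroup of $H$—indeed it is all of $H_\Z$ once one observes $H\cap(\glpmz\times\glpnz)$ already satisfies the block constraints. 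Then Corollary~\ref{opremark}, applied with this $H$, $A$, and $B$, decides whether there exists $(U,V)\in H_\Z$ with $UAV=B$, which is exactly the combinatorial criterion. Combining this with the reduction of \cite{errs:complete} gives a decision procedure for stable isomorphism of unital graph $C^*$-algebras.

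The steps in order: (1) quote \cite[Corollary 14.3]{errs:complete} to translate the $C^*$-algebraic question into the matrix question over $\mpmnz$; (2) identify $H$ as above and check it is an explicitly given $\Q$-group with $H_\Z$ the prescribed arithmetic subgroup; (3) apply Corollary~\ref{opremark} (which in turn rests on Corollary~\ref{Hactiontheorem} and ultimately Theorem~\ref{grunewaldsegal}) to get the algorithm for $UAV=B$ with $(U,V)\in H_\Z$; (4) assemble the effective reductions into a single decision procedure. I expect the only genuine subtlety—and the step most deserving of a careful sentence—is (2): confirming that the constraint ``$V\{i\}=1$ whenever $n_i=1$'' is algebraic over $\Q$ in the matrix entries (so that $H$ is a $\Q$-group and not merely a subgroup) and that membership in $H_\Z$ is effectively decidable with an effective index bound (here the bound is trivial since $\Gamma=H_\Z$). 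Everything else is bookkeeping: the effectivity of passing from graphs to $(\mathcal P,\vec m,\vec n,A,B)$ is supplied by \cite{errs:complete}, and the effectivity of the matrix algorithm is supplied by the Grunewald--Segal machinery already invoked in Corollaries~\ref{Hactiontheorem} and~\ref{opremark}.
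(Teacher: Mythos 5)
Your proposal is correct and follows essentially the same route as the paper: both invoke the reduction of \cite[Corollary 14.3]{errs:complete} to the matrix problem $UAV=B$ over $\mpmnz$ with the constraint $V\{i\}=1$ when $n_i=1$, observe that this constraint cuts out an explicitly given $\Q$-subgroup $H$ of $\glpmc\times\glpnc$ whose integer points are exactly the allowed pairs, and then apply Corollary~\ref{opremark} (hence Corollary~\ref{Hactiontheorem} and Theorem~\ref{grunewaldsegal}). Your extra sentence checking that the constraint is polynomial over $\Q$ and that $H_\Z$ is the full (hence explicitly given) arithmetic subgroup is exactly the point the paper makes in passing.
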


Below, for $R$ a subring of $\C$
and $M$ an $n\times m$ matrix over $\C$,
$\mathrm{im}_{R} (M)$ denotes
$\{Mz\in \C^n \colon z \in R^m \}$  .

\begin{theorem}\label{decidingwithunit}
  Suppose that $x,y$ are column vectors in $\Z^n$ and $H$ is
 an explicitly given  $\Q$-group
 which is a subgroup  of   $\glpmc \times \glpnc$ (via an explicitly given
 embedding defined over $\Q$).
  Then given matrices $A,B$ in
  $\mpmnq$,
    there is an algorithm which decides
    whether there exists $(U,V)\in H_{\Z}$ such that the following hold:
  \begin{align} \label{decideunit1}
    UAV^{-1} \ &=\ B\  , \quad \text{ and } \\
\label{decideunit2}
    (V^{-1})^Tx -y \ &  \in \ \mathrm{im}_{\Z} (B^T) \  .
  \end{align}
The algorithm produces such a $(U,V)$, when one exists.
  \end{theorem}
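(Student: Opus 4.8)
The plan is to reduce this to Corollary~\ref{Hactiontheorem} by building a larger $\Q$-group and a larger rational action that simultaneously encodes the matrix equivalence condition \eqref{decideunit1} and the membership condition \eqref{decideunit2}. The key observation is that condition \eqref{decideunit2} says that $(V^{-1})^Tx$ and $y$ become equal modulo the subgroup $\mathrm{im}_{\Z}(B^T)$ of $\Z^n$; but $B^T$ is not fixed ($B = UAV^{-1}$ varies with $(U,V)$). However, $\mathrm{im}_{\Z}(B^T) = \mathrm{im}_{\Z}((V^{-1})^T A^T U^T) = (V^{-1})^T \,\mathrm{im}_{\Z}(A^T U^T)$, and once \eqref{decideunit1} holds this equals $(V^{-1})^T\,\mathrm{im}_{\Z}(A^T)$ provided $U \in \glpmz$ so that $U^T$ is invertible over $\Z$ (which it is, since $H_\Z \subseteq \glpmz \times \glpnz$). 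Hence, applying the invertible map $V^T$, condition \eqref{decideunit2} is equivalent to $x - V^T y \in \mathrm{im}_{\Z}(A^T)$, i.e.\ $x$ and $V^T y$ are congruent modulo the \emph{fixed} lattice $\mathrm{im}_{\Z}(A^T)$. The point is that we have traded a $B$-dependent lattice for the fixed lattice $L := \mathrm{im}_{\Z}(A^T) \subseteq \Z^n$.

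Next I would reformulate ``$x - V^T y \equiv 0 \pmod L$'' as an orbit condition for an action on a finite set. The quotient $\Z^n / L$ is a finitely generated abelian group, explicitly computable from $A$ (Smith normal form), with torsion part a finite group $\Z^n/(L + $ saturation$)$ and a free part. One subtlety: $\Z^n/L$ need not be finite, so I would first split off the free part. Write $L^{\mathrm{sat}}$ for the saturation of $L$ in $\Z^n$; then $x - V^Ty \in L$ iff both (a) $x - V^Ty \in L^{\mathrm{sat}}$, a $\Q$-linear (hence $\Q$-polynomial, effectively checkable) condition that can be folded into the action via an additional block, and (b) the image of $x - V^T y$ in the finite group $L^{\mathrm{sat}}/L$ is zero. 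For (b), let $\rho'$ be the action of $\glpnc$ (via $V$) on the finite set $L^{\mathrm{sat}}/L$ by $V \cdot \overline{z} = \overline{V^T z}$ — this is well-defined on $\Z$-points and is a rational action in the required sense because the finite set sits inside a complex vector space on which the matrix entries act by (ratios of) polynomials; it is explicitly given since $V^T$ can be reduced mod $L$ effectively. Then (b) says $\rho'_{(U,V)}(\overline{x}) = \overline{y}$... wait, more precisely $\rho'_V(\overline{y})$ should equal $\overline{x}$, an orbit-point condition.

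Finally I would assemble everything. Let $H' := H$, acting on the vector space $\mpmnc \oplus \C^n \oplus (L^{\mathrm{sat}}\otimes \C / L) $ — or, to stay strictly within the Grunewald--Segal framework, on a subset $W$ of $\C^m$ of the appropriate dimension — by the product action: $(U,V)$ acts on the first coordinate by $M \mapsto UMV^{-1}$, on the second by $z \mapsto V^T z$ (tracking the saturation condition), and on the third by the finite action $\rho'$ above. Then the existence of $(U,V) \in H_\Z$ satisfying \eqref{decideunit1}--\eqref{decideunit2} is equivalent to the existence of $g \in H'_{\Z} = H_\Z$ carrying the vector $v := (A,\ x,\ \overline{x})$ to a vector of the form $(B,\ *,\ \overline{y})$ where $*$ ranges over $x + L^{\mathrm{sat}}$; since ``$* \in x + L^{\mathrm{sat}}$'' is itself describable by finitely many affine-linear equations over $\Q$, I can instead fix the target to be $w := (B,\ y,\ \overline{y})$ after first composing with a translation-correction, or more cleanly enlarge $W$ so that the saturation condition is a defining equation of $W$ rather than part of the target. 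Corollary~\ref{Hactiontheorem} (or rather Theorem~\ref{grunewaldsegal} directly, since I am using a genuinely new action) then decides orbit equality and produces the witness $(U,V)$.

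The main obstacle I anticipate is the handling of the free part of $\Z^n/L$: the cleanest formulation requires checking a $\Q$-linear condition (membership in $L^{\mathrm{sat}}$) that is not naturally an orbit condition, so I must either (i) absorb it into the subset $W$ on which the action is defined — legitimate since $W$ need not be all of $\C^m$ and the rational-functions-on-rational-points hypothesis is preserved — or (ii) argue that after intersecting the target with the affine lattice $y + (\text{something})$ the whole problem still reduces to a single orbit query. A secondary point requiring care is verifying that the induced action on the finite group $L^{\mathrm{sat}}/L$ genuinely satisfies the ``rational action, explicitly given'' hypotheses of Theorem~\ref{grunewaldsegal}; this is routine (a finite set embeds in $\C^k$ and the action is given by permutations computed by reducing integer matrices modulo a fixed finite modulus) but should be stated. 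Everything else — the identity $\mathrm{im}_{\Z}(B^T) = (V^{-1})^T\,\mathrm{im}_{\Z}(A^T)$ using invertibility of $U^T$ over $\Z$, the Smith-normal-form computation of $L^{\mathrm{sat}}/L$, and the product-action bookkeeping — is straightforward.
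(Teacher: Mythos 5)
Your opening reduction is correct and useful: since $U^T\in GL_m(\Z)$ and $B^T=(V^{-1})^TA^TU^T$, one indeed has $\mathrm{im}_{\Z}(B^T)=(V^{-1})^T\,\mathrm{im}_{\Z}(A^T)$, so \eqref{decideunit2} becomes $x-V^Ty\in L:=\mathrm{im}_{\Z}(A^T)$, a condition against a fixed lattice. The gap is in how you then encode this lattice condition. Your proposed third factor --- the permutation action of $V$ on the finite group $L^{\mathrm{sat}}/L$ by $\overline{z}\mapsto\overline{V^Tz}$ --- cannot be a rational action in the Grunewald--Segal sense. First, it is not even well defined: a general $V\in\glpnz$ (let alone $\glpnc$) need not satisfy $V^TL\subseteq L$; that inclusion holds only for $V$ in the stabilizer of $A$ (precisely the computation $UAV^{-1}=A\Rightarrow A^TU^T=V^TA^T$), and only after one has reduced to the case $A=B$. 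Second, and more fundamentally, even on the stabilizer the action on a finite set cannot have coordinates given by rational functions of the matrix entries on the identity component: the relevant groups here are connected, a rational function is continuous, and a continuous map from a connected group into a discrete set of coordinates is constant, whereas multiplication by $V^T$ on $L^{\mathrm{sat}}/L$ genuinely varies with $V$ (already for $n=1$, $L=3\Z$, the permutation of $\Z/3$ induced by $V=1,2,4,5,\dots$ is not given by any rational function of $V$). So the "finite action" $\rho'$ does not satisfy the hypotheses of Theorem~\ref{grunewaldsegal}, and the single-orbit-query assembly collapses. A secondary unresolved point is that Grunewald--Segal decides whether $\rho_g(v)=w$ for a \emph{specific} target $w$, not whether the orbit of $v$ meets a coset such as $(B,\,x+L^{\mathrm{sat}},\,\overline{y})$; your suggested fixes for this are not carried out.

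The paper avoids both problems by a two-step reduction. First, Corollary~\ref{Hactiontheorem} either reports that no $(U,V)\in H_\Z$ satisfies \eqref{decideunit1} or produces one; replacing $(A,x)$ by $(UAV^{-1},(V^{-1})^Tx)$ reduces everything to the case $A=B$, where the relevant group is the stabilizer $J_A$ and $V^T$ \emph{does} preserve $\mathrm{im}_{\C}(A^T)$. Second (Lemma~\ref{cokernellemma}), instead of acting on the quotient $\Z^n/L$, one tracks the columns $a_1^T,\dots,a_m^T$ of $A^T$ as extra coordinates in $(\C^n)^{m+1}$ and adjoins a unipotent block row $K_{0j}\in\mathrm{End}(\mathrm{im}_{\C}(A^T))$ which adds $\sum_j K_{0j}a_j^T$ to the first coordinate; this is a genuinely polynomial action, the target is the exact vector $(y,a_1^T,\dots,a_m^T)$, and integrality of the coset representative is enforced by passing to the explicitly given arithmetic subgroup $\Gamma$ of elements with $K_{0j}\cdot\mathrm{im}_{\Z}(A^T)\subseteq\mathrm{im}_{\Z}(A^T)$. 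If you want to salvage your write-up, you should adopt this two-step structure and replace the quotient action by such a unipotent extension; the finite-quotient device cannot be made to fit the framework.
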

\begin{proof}
  Corollary  \ref{Hactiontheorem} decides whether there exists
  $(U,V) \in H_{\Z}$ such that \eqref{decideunit1} holds, and if
  so produces such a $(U,V)$.
  If $(U,V)$ doesn't exist, the problem is decided; given such a $(U,V)$,
  after replacing
  $(A,B,x,y)$ with $(UAV^{-1},B, (V^{-1})^Tx, y)$, it remains
  to produce a deciding algorithm in the case $A=B$.
We leave this step to Lemma \ref{cokernellemma} below.
\end{proof}

\begin{remark}\label{re:defineoverQ}
If $C$ is an $m\times n$ integer matrix, then the set of $m\times m$ matrices $D$ with $D\cdot \mathrm{im}_{\C}(C)\subseteq \mathrm{im}_{\C}(C)$ is the vanishing set of an explicitly given set of polynomials over $\Q$ (assuming $C$ is given explicitly).  Indeed, using standard linear algebra over $\Q$, we can find a $k\times m$ matrix $M$ over $\Q$ so that $Mx=0$ if and only if $x\in \mathrm{im}_{\C}(C)$.  Then we are looking for the matrices $D$ such that $MDC=0$, which is an explicitly given set of polynomial equations over $\Q$ in the entries of $D$.
\end{remark}

\begin{lemma}\label{cokernellemma}
  Suppose $m,n$ are integers; $A$ is an $m\times n$ matrix with
  integer entries; and $J$ is an explicitly given $\Q$-group in
  $\glmc \times \glnc$. Then there is an algorithm which decides,
  given
  $x,y$ in $\Z^n$, whether there exists $(U,V)$ in $J_{\Z} $ such that
  \begin{align} \label{coklemma1}
    UAV^{-1} \ &=\ A\  , \quad \text{ and } \\
    \label{coklemma2}
    (V^{-1})^Tx -y \ &  \in \ \textnormal{im}_{\Z} (A^T) \  .
  \end{align}
  \end{lemma}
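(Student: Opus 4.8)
The plan is to reduce the problem to an instance of Corollary~\ref{Hactiontheorem} by building a single $\Q$-group acting rationally on a space of pairs, where the action encodes both conditions~\eqref{coklemma1} and~\eqref{coklemma2} simultaneously. The key observation is that condition~\eqref{coklemma2} is a condition on the coset of $(V^{-1})^Tx$ in the cokernel $\Z^n/\mathrm{im}_{\Z}(A^T)$, and that an element $(U,V)\in J_{\Z}$ satisfying~\eqref{coklemma1} automatically preserves $\mathrm{im}_{\C}(A^T)$ on the appropriate side: from $UAV^{-1}=A$ we get $A^T = (V^{-1})^TA^TU^T$, so $(V^{-1})^T$ maps $\mathrm{im}_{\C}(A^T)$ into itself. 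Hence the transpose-inverse action of such pairs descends to a linear action on $\C^n/\mathrm{im}_{\C}(A^T)$, and we want to know whether the orbit of the image $\bar x$ of $x$ under the stabilizer of $A$ (inside $J_\Z$) contains the image $\bar y$ of $y$.

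First I would pass from $J$ to the stabilizer $J_A = \{(U,V)\in \glmc\times\glnc : UAV^{-1}=A\}\cap J$. By Remark~\ref{re:defineoverQ} (or simply because $UAV^{-1}=A$ is an explicit system of polynomial equations over $\Q$ in the entries of $U$ and $V^{-1}$, hence over $\Q$ in the entries of $U,V$ after clearing denominators using $\det$), the group $J_A$ is again an explicitly given $\Q$-group. Second, I would choose, by standard linear algebra over $\Q$, a rational splitting $\C^n = \mathrm{im}_{\C}(A^T)\oplus W$ and identify the quotient $\C^n/\mathrm{im}_{\C}(A^T)$ with $W$; let $\pi\colon\C^n\to W$ be the corresponding projection, which is defined over $\Q$. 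Because every $(U,V)\in J_A$ satisfies $(V^{-1})^T(\mathrm{im}_{\C}(A^T))\subseteq \mathrm{im}_{\C}(A^T)$, the formula $\rho_{(U,V)}(w) = \pi\big((V^{-1})^T w\big)$ (reading $w\in W\subseteq \C^n$) gives a well-defined linear action of $J_A$ on $W$, and it is an explicitly given rational action: the entries of $(V^{-1})^T$ are rational functions of the entries of $V$ (hence of $g=(U,V)$) with rational coefficients on the identity component, and they are effectively computable on integer points. Third, I would apply Corollary~\ref{Hactiontheorem} — more precisely Theorem~\ref{grunewaldsegal} directly, since we now have a bona fide rational action rather than a two-sided multiplication — to decide whether there exists $(U,V)\in (J_A)_{\Z} = J_{\Z}\cap\{UAV^{-1}=A\}$ with $\rho_{(U,V)}(\pi(x)) = \pi(y)$.

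The remaining point is to verify that $\rho_{(U,V)}(\pi(x))=\pi(y)$ is equivalent to conditions~\eqref{coklemma1}--\eqref{coklemma2} holding for $(U,V)$. Since we have built $\eqref{coklemma1}$ into the group $J_A$, we need only check that for $(U,V)\in (J_A)_\Z$, the equation $\pi((V^{-1})^Tx) = \pi(y)$ in $W$ is equivalent to $(V^{-1})^Tx - y\in\mathrm{im}_{\Z}(A^T)$. The forward direction requires care: $\pi((V^{-1})^Tx-y)=0$ only gives $(V^{-1})^Tx - y\in\mathrm{im}_{\C}(A^T)\cap\Z^n$, and one must argue this equals $\mathrm{im}_{\Z}(A^T)$ — i.e. that $\mathrm{im}_{\Z}(A^T)$ is saturated in $\Z^n$. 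This is \emph{not} automatic, so I expect this to be the main obstacle. The fix is to replace the target space: instead of projecting to a rational complement of $\mathrm{im}_{\C}(A^T)$, work with the genuine finitely generated abelian group $\Z^n/\mathrm{im}_{\Z}(A^T)$, whose torsion must be tracked. Concretely, compute the Smith normal form of $A^T$ to obtain the cokernel $\Z^n/\mathrm{im}_{\Z}(A^T)\cong \Z^k\oplus T$ with $T$ finite, via an effective $\Q$-defined (indeed $\Z$-defined) change of coordinates; the action of $(V^{-1})^T$ on $\Z^n$ descends to this cokernel because $(V^{-1})^T$ preserves $\mathrm{im}_{\Z}(A^T)$ (as $UAV^{-1}=A$ with $U,V$ integral forces $A^T = (V^{-1})^TA^TU^T$ with $(V^{-1})^T$ and $U^T$ integral, and $(V^{-1})^T$ invertible over $\Z$). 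One then realizes this cokernel action inside an ambient $\glrc\times\cdots$ as an explicitly given rational action on a suitable vector space — e.g. act on $\C^k$ for the free part and permute the finitely many points of $T$ for the torsion part — and applies Theorem~\ref{grunewaldsegal}. With the cokernel handled integrally rather than over $\C$, the equivalence with~\eqref{coklemma2} becomes a tautology, completing the proof.
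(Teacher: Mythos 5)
Your setup begins exactly as the paper's proof does: you reduce to the stabilizer $J_A=\{(U,V)\in J: UAV^{-1}=A\}$, and you use $UAV^{-1}=A\Rightarrow A^TU^T=V^TA^T$ to see that $(V^{-1})^T$ preserves $\mathrm{im}_{\C}(A^T)$ (and, for integral points, $\mathrm{im}_{\Z}(A^T)$). You have also correctly isolated the real difficulty: $\mathrm{im}_{\Z}(A^T)$ need not be saturated in $\Z^n$, so projecting to a rational complement of $\mathrm{im}_{\C}(A^T)$ only tests membership in $\mathrm{im}_{\C}(A^T)\cap\Z^n$, which is too weak for \eqref{coklemma2}.

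The gap is in your proposed fix. Passing to the cokernel $\Z^n/\mathrm{im}_{\Z}(A^T)\cong\Z^k\oplus T$ yields an action of the \emph{discrete} group $(J_A)_{\Z}$ that is not the restriction of any rational action of the algebraic group $J_A$ in the sense required by Theorem~\ref{grunewaldsegal}: on the identity component $J_A^0$, the coordinates of $\rho_g(w)$ must be rational functions of $g$, and a rational (hence continuous) function on a connected group taking values in a finite set of points of $\C^m$ (your model for $T$) is constant, so $J_A^0$ would have to fix every torsion point --- yet the integral points of $J_A^0$ genuinely act nontrivially on $T$ in general. (Take $A=2I_n$ and $J$ the diagonal copy of $GL_n(\C)$: then $J_A=J$ is connected, $T=(\Z/2\Z)^n$, and $GL_n(\Z)$ acts on it nontrivially for $n\geq 2$.) Moreover $\mathrm{Aut}(\Z^k\oplus T)$ is not $GL_k(\Z)\times\mathrm{Aut}(T)$, so splitting into a free part and a torsion part does not even capture the cokernel action. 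The missing idea in the paper is to avoid the quotient altogether and work upstairs: augment $x$ to the vector $(x,a_1^T,\dots,a_m^T)\in(\C^n)^{m+1}$, where $a_j^T$ are the columns of $A^T$, and act by a block group $K_A$ whose $(0,0)$ block ranges over $H_A$ (transposed), whose other diagonal blocks are the identity, and whose blocks $K_{0j}$ ($j\geq 1$) range over $\mathrm{End}(\mathrm{im}_{\C}(A^T))$. Condition \eqref{coklemma2} then becomes the statement that some element of the arithmetic subgroup $\Gamma$ --- those $K$ with $K_{0j}\cdot\mathrm{im}_{\Z}(A^T)\subseteq\mathrm{im}_{\Z}(A^T)$, which has finite index by a common-denominator argument --- carries $(x,a_1^T,\dots,a_m^T)$ to $(y,a_1^T,\dots,a_m^T)$. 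This keeps everything inside a genuine rational action of a $\Q$-group and pushes the integrality of the image into the choice of arithmetic subgroup, which is exactly where Grunewald--Segal permits it.
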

\begin{proof} Let
  $J_A = \{ (U,V)\in J\colon UAV^{-1} =A \}$, an explicitly given
  $\Q$-group. Set $H_A= \{ V\colon (U,V) \in J_A\}$ .
  Let $E=\text{End}(\mathrm{im}_{\C}(A^T))$
 be the set of
  $n\times n$ matrices $M$ over $\C$ such that
  $\{ Mw: w\in \mathrm{im}_{\C}(A^T)\} \subseteq \mathrm{im}_{\C}(A^T)$.
  Define $K_A$ to be the set of matrices $K$ in
  $GL_{n(m+1)}(\C )$ with block form
  $(K_{ij})_{0\leq i,j \leq m} $,
    with each $K_{ij}$ $n\times n$;
$K_{00}^T \in H_A$ (or equivalently, $(K_{00}^T)^{-1} \in H_A$);
    $K_{ii}= I$ for $1\leq i \leq m$;
    $K_{ij} = 0$ if $1\leq i\neq j$;
    and $K_{0j} \in E$ if $j\geq 1$.
    Visually, we have
  \begin{equation}\label{eq:matrix}
  K= \begin{pmatrix}
    K_{00}& K_{01} &K_{02} & \cdots & K_{0m} \\
    0    & I & 0 & \cdots & 0 \\
    0    & 0 & I &\cdots & 0 \\
    \vdots   &\vdots &\vdots &\vdots & \vdots \\
    0    & 0 & 0 &\cdots & I
  \end{pmatrix} \ .
  \end{equation}

  We claim that $K_A$ is a group. To show this, it suffices to show,
  given $V$ in $H_A$, that
  $V^T \in E$ and  $(V^{-1})^T \in E$.
  (This follows from considering, given
  $K,L$ in $K_A$ with   block forms from \eqref{eq:matrix},
  the block forms of $K^{-1}$ and $KL$;
  for $1\leq j \leq m$,
  $(K^{-1})_{0j} = -K_{00}^{-1}K_{0j}$ and
  $(KL)_{0j} = K_{00}L_{0j} + K_{0j}$.)
  Because $H_A$ is a group, it suffices to show
  $V^T \in E$.
For this,
  pick $U$ such that $(U,V) \in J_A$, and note
\begin{equation}\label{eq:pres.trans}
  UAV^{-1} = A \implies
  UA = AV \implies
A^T  U^T = V^T A^T \ .
\end{equation}
  Let $A^Tw$, $w\in \C^m$, be an arbitrary element of
  $\mathrm{im}_{\C}(A^T)$.
Then
  \[
  V^T \big(A^Tw\big)
  = A^T \big(U^T w \big) \in \mathrm{im}_{\C}(A^T)\ .
  \]

%
  The group $K_A$ is an explicitly given $\Q$-group by Remark~\ref{re:defineoverQ}.
  Set $W = (\C^n)^{m+1}$, writing $w$ in $W$ as
  $w= (w^{(0)}, \dots , w^{(m)})$ .
  There is an explicitly given rational action $\kappa$ of
  $K_A$ on $W$, given for $K$ in $K_A$ by the rule
  \begin{align*}
    (\kappa_K w)^{(0)} &= \sum_{j=0}^m K_{0j}w^{(j)}\ , \\
        (\kappa_K w)^{(j)} &=  w^{(j)}\ , \quad  1\leq j\leq m\ .
  \end{align*}
  Define the $\Q$-group $\widetilde K_A =
  \{ (M,K) \in \glmc \times K_A\colon (M,(K_{00}^{-1})^T) \in J_A \}$; it is explicitly given.
  There is an explicitly given rational action  $\rho$ of
  $\widetilde K_A$ on $W$ by $\rho_{(M,K)}w = \kappa_Kw$.

  Let $\Gamma$ be the subgroup of $(\widetilde K_A)_\Z$ consisting of those $(M,K)$ such that $K_{0j}\cdot \mathrm{im}_{\Z}(A^T)\subseteq \mathrm{im}_{\Z}(A^T)$ for $j\geq 1$.  We claim that $\Gamma$ is an explicitly given arithmetic subgroup of $\widetilde K_A$.  To see that it is a subgroup, we again use \eqref{eq:pres.trans}, but this time in the case that $(U,V)\in (J_A)_{\Z}$. Let $b_1,\ldots, b_k$ be a basis for the free abelian group $\mathfrak A=\mathrm{im}_{\Q}(A^T)\cap \mathbb Z^n\supseteq \mathrm{im}_{\Z}(A^T)$.  Let $b_i=A^Tc_i$ with $c_i\in \mathbb Q^m$.  Let $\ell$ be a common denominator for the entries of the $c_i$.  Then $\ell\cdot \mathfrak A\subseteq \mathrm{im}_{\Z}(A^T)$. It now follows easily that if $(M,K)\in (\widetilde K_A)_{\Z}$, then $\ell K_{0j}\cdot \mathrm{im}_{\Z}(A^T)\subseteq \ell\cdot \mathfrak A\subseteq \mathrm{im}_{\Z}(A^T)$ for $j\geq 1$.  We conclude  that $\Gamma$ has finite index in $(\widetilde K_A)_{\Z}$ and it is clearly explicitly given.

  Let $a_1^T, \dots , a_m^T$ denote the columns of $A^T$.
  Given $x,y$ in $\Z^n$, we claim that the following are
  equivalent.
  \begin{enumerate}
  \item $\exists (U,V)$ in $J_{\Z}$ such that
        $(V^{-1})^Tx -y \in \mathrm{im}_{\Z} (A^T) $  and
    $UAV^{-1} = A$.
  \item
    $\exists (M,K)$ in $\Gamma$ such that
    $\rho_{(M,K)}\colon (x,a_1^T, \dots , a_m^T) \mapsto
    (y,a_1^T, \dots , a_m^T)$.
  \end{enumerate}
  Let us check the claim. Given $(U,V)$
in $J_{\Z}$
  from (1), we have $V\in H_A$, and there are
  integers $r_1, \dots , r_m$ such that
  $(V^{-1})^Tx -y = r_1a_1^T + \dots + r_ma_m^T $ .
  Define $(M,K)$ in $\Gamma$ by setting $K_{00} = (V^{-1})^T$,
  $K_{0j} = -r_jI$ for $1\leq j \leq m$, and $M=U$.
  Then
  $\kappa_K\colon (x,a_1^T, \dots , a_m^T) \mapsto
  (y,a_1^T, \dots , a_m^T)$.

  Conversely, suppose
  $(M,K)$ in $\Gamma$, satisfies (2).
  Set $U=M$ and $V=(K_{00}^{-1})^T$.
    Then $(U,V)\in J_{\Z}$, and
    \[
(V^{-1})^T x - y =
    -K_{01}a_1^T - \cdots -K_{0m}a_m^T \in
    \textnormal{im}_{\Z}(A^T)\ .
    \]
    Because $(M,K) \in \Gamma$, we have
    $(U,V) \in J_{\Z}$. 
    This finishes the proof of the  claim.

  By Theorem \ref{grunewaldsegal}, there is an algorithm
  deciding whether
  (2) holds, because $\kappa$ is an explicitly given rational
  action of $K_A$ on $W$. Therefore there is an algorithm deciding
  (1).
\end{proof}

Eilers, Restorff, Ruiz and S{\o}rensen reduced the
problem of deciding
isomorphism of
two unital graph
$C^*$-algebras to   
the  result of Theorem \ref{decidingwithunit}
(after $V^{-1}$ in
  \eqref{decideunit1} and \eqref{decideunit2} 
is replaced with $V$).
In this application, the group $H_{\Z}$ of
  Theorem \ref{decidingwithunit}   is (again) 
\[
H_{\Z}= \{ (U,V) \in \glpmz \times \glpnz:
n_i=1 \implies V\{ i\} =1\} \ .
\] 
  The reduction to Theorem \ref{decidingwithunit} 
is explained in  the proof of \cite[Corollary 14.7]{errs:complete}. 
It follows
that their work
implies the following result.

\begin{theorem} \label{unitalcdecide}
(\cite[Corollary 14.7]{errs:complete}) 
  Isomorphism  of unital graph $C^*$-algebras is
  decidable.
\end{theorem}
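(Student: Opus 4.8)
The plan is to read this off from Theorem~\ref{decidingwithunit} via the classification-theoretic reduction of Eilers, Restorff, Ruiz and S{\o}rensen, exactly parallel to how stable isomorphism was handled above. Concretely, I would invoke \cite[Corollary 14.7]{errs:complete} (together with the discussion preceding the statement of Theorem~\ref{unitalcdecide}): it produces, effectively from a pair of unital graph $C^*$-algebras, a finite poset $\mathcal P$, nonnegative vectors $\vec m,\vec n$, matrices $A,B\in\mpmnz$, and column vectors $x,y\in\Z^n$, such that the two algebras are isomorphic if and only if there exists $(U,V)$ in
\[
H_\Z=\{(U,V)\in\glpmz\times\glpnz\colon n_i=1\implies V\{i\}=1\}
\]
satisfying $UAV=B$ and $V^Tx-y\in\mathrm{im}_\Z(B^T)$. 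Comparing with \eqref{decideunit1}--\eqref{decideunit2}, this is precisely the condition decided by Theorem~\ref{decidingwithunit} after $V^{-1}$ is replaced by $V$, so the whole proof reduces to two elementary verifications: that the $V\leftrightarrow V^{-1}$ swap is harmless, and that the constraint group $H$ meets the hypotheses of Theorem~\ref{decidingwithunit}.

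For the first point I would argue as in the proof of Corollary~\ref{opremark}: put $W=V^{-1}$, so that asking for $(U,V)\in H_\Z$ with $UAV=B$ and $V^Tx-y\in\mathrm{im}_\Z(B^T)$ is the same as asking for $(U,W)$ with $UAW^{-1}=B$ and $(W^{-1})^Tx-y\in\mathrm{im}_\Z(B^T)$, i.e.\ \eqref{decideunit1}--\eqref{decideunit2} verbatim. The one thing to notice is that $(U,W)$ should still range over $H_\Z$; here this is automatic, since the diagonal blocks of the inverse of a block upper triangular matrix are the inverses of its diagonal blocks, so the condition ``$n_i=1\implies V\{i\}=1$'' is preserved under $V\mapsto V^{-1}$ over $\C$ and over $\Z$, and $H$ coincides with its image $H^{\circ}$ under $(U,V)\mapsto(U,V^{-1})$ (so the delicate clause in Corollary~\ref{opremark} is vacuous in this application).

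For the second point, I would check that $x,y\in\Z^n$ and $A,B\in\mpmnz\subseteq\mpmnq$ as demanded, and that $H$ is an explicitly given $\Q$-group sitting inside $\glpmc\times\glpnc$ via an explicit embedding over $\Q$: indeed $H$ is cut out of the $\Q$-group $\glpmc\times\glpnc$ by the (linear, integer) equations asserting $V\{i\}=1$ for each $i$ with $n_i=1$, and these equations --- like the defining equations of $\glpmc\times\glpnc$ --- are computable from $\mathcal P$, $\vec m$, $\vec n$; its integer points $H_\Z$ are exactly the ERRS group displayed above. Since $\mathcal P,\vec m,\vec n,A,B,x,y$ (hence $H$) are produced algorithmically from the two input graphs, Theorem~\ref{decidingwithunit} applied with this $H$ supplies the desired decision procedure.

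As with the earlier corollaries, essentially all of the difficulty is absorbed into the two results being quoted --- the deep classification and reduction of \cite[Corollary 14.7]{errs:complete} on the $C^*$-side, and Theorem~\ref{decidingwithunit} (hence Lemma~\ref{cokernellemma} and the Grunewald--Segal Algorithm~A, Theorem~\ref{grunewaldsegal}) on the arithmetic-groups side. So the only genuine obstacle is bookkeeping: making sure the target of the ERRS reduction is literally an instance of \eqref{decideunit1}--\eqref{decideunit2} up to the $V\leftrightarrow V^{-1}$ substitution, and that its constraint group is of the explicitly-given-$\Q$-group type that Theorem~\ref{decidingwithunit} permits --- which is what the two verifications above accomplish.
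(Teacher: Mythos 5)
Your proposal is correct and follows the same route as the paper: the paper's entire argument is to cite the ERRS reduction from \cite[Corollary 14.7]{errs:complete} to the matrix problem with constraint group $H_{\Z}= \{ (U,V) \in \glpmz \times \glpnz\colon n_i=1 \implies V\{ i\} =1\}$ and then apply Theorem~\ref{decidingwithunit} ``after $V^{-1}$ is replaced with $V$.'' Your two verifications --- that $H=H^{\circ}$ because the diagonal blocks of $V^{-1}$ are the inverses of those of $V$ (so the swap in the style of Corollary~\ref{opremark} preserves the constraint and carries \eqref{decideunit1}--\eqref{decideunit2} over verbatim), and that $H$ is an explicitly given $\Q$-subgroup of $\glpmc\times\glpnc$ --- are exactly the details the paper leaves implicit, correctly supplied.
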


\section{Isomorphism of diagrams and quiver representations}
The purpose of this section is to show that the isomorphism problem is decidable for finite diagrams of
homomorphisms of finitely generated abelian groups.
These include diagrams of the sort that appear
in full and reduced K-webs as invariants
of operator algebras or flow equivalence.

Let $Q=(Q_0,Q_1)$ be a finite directed graph, hereafter called a \emph{quiver}, with vertex set $Q_0$ and edge set $Q_1$.  We shall write $\so(e)$ and $\ra(e)$ for the source and target of an edge $e$, respectively.  By a \emph{$\mathbb Z$-representation} $\Phi=(A,\phi)$ of $Q$ we mean an assignment of a finitely generated abelian group $A_v$ to each vertex $v\in Q_0$ and a homomorphism $\phi_e\colon A_{\so (e)}\to A_{\ra (e)}$. A morphism $f\colon (A,\phi)\to (B,\rho)$ is a collection of homomorphisms $f_v\colon A_v\to B_v$, one for each $v\in Q_0$, such that the diagram
\[\xymatrix{A_{\so(e)}\ar[r]^{\phi_e}\ar[d]_{f_{\so(e)}}  &A_{\ra(e)}\ar[d]^{f_{\ra (e)}}\\ B_{\so(e)}\ar[r]_{\phi_e} &B_{\ra (e)}}\]
commutes for all $e\in Q_1$.  The category of $\mathbb Z$-representations of $Q$ will be denoted $\mathrm{rep}_{\mathbb Z}(Q)$.

The \emph{path ring} $\mathbb ZQ$ is the ring defined as follows.  As an abelian group, it has basis the set of directed paths in $Q$, including an empty path $P_v$ for each vertex $v\in Q_0$.  The product of two basis elements $q$ and $r$, is their concatenation, if defined, and otherwise is $0$. We follow the convention here of concatenating edges from right to left, as if we were composing functions.  For example, the path \[u\xrightarrow{\,e\,}v\xrightarrow{\,f\,}w\] is denoted $fe$. Note that $\mathbb ZQ$ is finitely generated as a ring by the $P_v$ with $v\in Q_0$ and the edges $e\in Q_1$.  Also note that $\mathbb ZQ$ is unital with $1=\sum_{v\in Q_0} P_v$ a decomposition into orthogonal idempotents. Let $\mathbb ZQ\text{-}\mathrm{mod}$ denote the category of (unital) left $\mathbb ZQ$-modules which are finitely generated over $\mathbb Z$. Then, analogously to the well studied case of representations of quivers over fields~\cite{assem}, there is an equivalence of categories between $\mathrm{rep}_{\mathbb Z}(Q)$ and $\mathbb ZQ\text{-}\mathrm{mod}$.  We state here how the equivalence behaves on objects because we want to show that it can be done algorithmically.  The fact that this is an equivalence of categories follows from a more general result of Mitchell~\cite[Theorem~7.1]{ringoids} applied to the free category generated by $Q$.

If $(A,\phi)$ is a $\mathbb Z$-representation of $Q$, then we obtain a left $\mathbb ZQ$-module, finitely generated over $\mathbb Z$, by taking as the underlying abelian group $\mathfrak A=\bigoplus_{v\in Q_0} A_v$.  The empty path $P_v$ acts as the projection to the summand $A_v$ (so it is the identity on $A_v$ and annihilates $A_w$ with $w\neq v$).  A non-empty path $p=e_n\cdots e_2e_1$ from $v$ to $w$ acts on $A_v$ by the composition $\phi_{e_n}\cdots \phi_{e_2}\phi_{e_1}$ and is zero on all summands $A_u$ with $u\neq v$.  Conversely, if $\mathfrak A$ is a left $\mathbb ZQ$-module, then we define $A_v=P_v\mathfrak A$ for $v\in Q_0$.  From the orthogonal decomposition $1=\sum_{v\in Q_0} P_v$ it follows easily that $\mathfrak A=\bigoplus_{v\in Q_0} A_v$ and that if $e\in Q_1$, then $eA_v=0$ if $v\neq \so(e)$ and $eA_{\so(e)}\subseteq A_{\ra(e)}$.  Thus we can define $\phi_e\colon A_{\so(e)}\to A_{\ra (e)}$ by $\phi_e(a)=ea$.

For algorithmic problems, we assume that $\mathbb Z$-representations $(A,\phi)$ of $Q$ are given by providing a finite
presentation for each group $A_v$ and giving the image under $\phi_e$ of each generator of $A_{\so(e)}$.  We assume that $\mathbb ZQ$-modules, finitely generated over $\mathbb ZQ$, are given via finite presentations as abelian groups and with the action of each edge and each empty path $P_v$ on the generators specified.  Clearly, there is a Turing machine which can turn such a presentation of a $\mathbb Z$-representation into such a presentation of a $\mathbb ZQ$-module, finitely generated over $\mathbb Z$ (and vice versa).  Therefore, to algorithmically decide isomorphism of $\mathbb Z$-representations of $Q$ is equivalent to deciding the isomorphism problem for $\mathbb ZQ$-modules which are finitely generated over $\mathbb Z$.  But Grunewald and Segal~\cite[Corollary~4]{GrunewaldSegal80} solved the isomorphism problem for $R$-modules finitely generated over $\mathbb Z$ when $R$ is a finitely generated ring. Consequently, we have the following.

\begin{theorem}\label{t:iso.quiver}
There is an algorithm that given as input a finite quiver $Q$ and two $\mathbb Z$-representations, decides whether the representations are isomorphic.
\end{theorem}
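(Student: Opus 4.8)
The plan is to reduce, via the equivalence of categories recalled above, to the isomorphism problem for modules over a finitely generated ring, and then to invoke Grunewald and Segal.

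First I would observe that the path ring $\mathbb ZQ$ is finitely generated as a ring, with generating set $\{P_v : v\in Q_0\}\cup Q_1$, and that from the quiver $Q$ one can effectively write down a finite presentation of $\mathbb ZQ$ on these generators: the relations are that the $P_v$ are orthogonal idempotents summing to $1$, together with the incidence relations $P_{\ra(e)}e=e=eP_{\so(e)}$ for each $e\in Q_1$. In particular $\mathbb ZQ$ is an explicitly given finitely generated ring, and it will be supplied as part of the input to the algorithm of \cite[Corollary~4]{GrunewaldSegal80} (cf.\ the Remark following Theorem~\ref{grunewaldsegal} on $J$, $\rho$, $\Gamma$ being part of the input).

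Next, given the two input $\mathbb Z$-representations $(A,\phi)$ and $(B,\rho)$ of $Q$, presented as stipulated above, I would run the Turing machine already described to produce presentations (as abelian groups finitely generated over $\mathbb Z$, with the action of each $P_v$ and each $e\in Q_1$ on the chosen generators recorded) of the associated left $\mathbb ZQ$-modules $\mathfrak A=\bigoplus_v A_v$ and $\mathfrak B=\bigoplus_v B_v$. Since the assignment $(A,\phi)\mapsto \mathfrak A$ is the object part of an equivalence of categories $\mathrm{rep}_{\mathbb Z}(Q)\to \mathbb ZQ\text{-}\mathrm{mod}$, and any equivalence of categories both preserves and \emph{reflects} isomorphisms, we have $(A,\phi)\cong (B,\rho)$ in $\mathrm{rep}_{\mathbb Z}(Q)$ if and only if $\mathfrak A\cong \mathfrak B$ as $\mathbb ZQ$-modules.

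Finally I would feed $\mathbb ZQ$, $\mathfrak A$ and $\mathfrak B$ into the algorithm of \cite[Corollary~4]{GrunewaldSegal80}, which decides isomorphism of modules that are finitely generated over $\mathbb Z$ over an explicitly given finitely generated ring; its output is the desired answer. The one point that requires care — and the only step I would call substantive — is the passage through the categorical equivalence: one must be sure that the explicit object correspondence described above genuinely underlies an equivalence (this is where Mitchell's \cite[Theorem~7.1]{ringoids} enters), so that isomorphism of the modules forces isomorphism of the representations and conversely. Once that is in hand, all of the decidability content is carried by \cite[Corollary~4]{GrunewaldSegal80}, and nothing further is needed.
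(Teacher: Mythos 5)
Your proposal is correct and follows essentially the same route as the paper: pass through the equivalence $\mathrm{rep}_{\mathbb Z}(Q)\simeq \mathbb ZQ\text{-}\mathrm{mod}$ (via Mitchell's theorem), effectively translate presentations, and invoke \cite[Corollary~4]{GrunewaldSegal80} for the finitely generated ring $\mathbb ZQ$. The points you flag as requiring care (that $\mathbb ZQ$ is explicitly given and part of the input, and that the equivalence reflects isomorphisms) are exactly the ones the paper relies on.
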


The reader is referred to~\cite{BoyleHuang} for the definitions of full and reduced $K$-webs in the following corollary.

\begin{corollary}\label{kwebglpisomorphism}
  Let $\mathcal P$ be a finite poset. There is an  algorithm which,
given     matrices in $\mpnz$,
 decides  whether
 their full $K$-webs  are isomorphic, and
 there is an algorithm which decides
 whether their reduced $K$-webs are isomorphic.
\end{corollary}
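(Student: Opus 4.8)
The plan is to reduce the corollary to Theorem~\ref{t:iso.quiver} by recognizing a full or reduced $K$-web as an effectively presented $\mathbb Z$-representation of a suitable finite quiver. First I would recall from \cite{BoyleHuang} that, for a matrix $A\in\mpnz$, the full $K$-web of $A$ is a finite diagram of homomorphisms of finitely generated abelian groups: to each distinguished index set $\mathsf S$ (order-convex, in the terminology of~\cite{BoyleHuang}) it assigns a Bowen--Franks-type group $\operatorname{cok}(\mathbf 1 - A_{\mathsf S})$, where $A_{\mathsf S}$ is the corresponding principal submatrix and $\mathbf 1$ the identity matrix, and to each inclusion $\mathsf S\subseteq\mathsf S'$ among these sets it assigns the induced homomorphism of cokernels. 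The shape of this diagram --- that is, the quiver $Q$ whose vertices are the distinguished index sets and whose edges are the covering inclusions --- depends only on $\mathcal P$ and $\vec n$. Unwinding the definition in \cite{BoyleHuang}, an isomorphism of $K$-webs is precisely an isomorphism in $\mathrm{rep}_{\mathbb Z}(Q)$ in the sense of Section~4: a family of group isomorphisms, one per vertex, commuting with every induced map. The reduced $K$-web is obtained from the full one by the canonical reduction of \cite{BoyleHuang}; it is again a finite diagram of homomorphisms of finitely generated abelian groups, hence a $\mathbb Z$-representation of a (computable) quiver, and its isomorphisms are again morphisms of $\mathbb Z$-representations.

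Next I would check that, from the input $A\in\mpnz$, one can algorithmically produce the data that Theorem~\ref{t:iso.quiver} expects: the quiver $Q$ together with a finite presentation of each vertex group and the images under each edge map of a set of generators. The quiver $Q$ is read off from $\mathcal P$ and $\vec n$. Each vertex group $\operatorname{cok}(\mathbf 1 - A_{\mathsf S})$ is the cokernel of an explicit integer matrix, so a finite presentation is obtained by Smith normal form; each edge map is induced by an explicit inclusion of lattices, and its effect on the chosen generators is computed by lifting a generator of the source cokernel to $\mathbb Z^k$, applying the inclusion matrix, and reducing modulo the relations of the target --- all effective. For the reduced $K$-web one additionally runs the reduction procedure of \cite{BoyleHuang}; this only ever forms subgroups, quotients, direct sums, and induced maps of explicitly presented finitely generated abelian groups, each of which is an effective operation, so it again outputs an explicitly presented $\mathbb Z$-representation. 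Feeding the two resulting $\mathbb Z$-representations into the algorithm of Theorem~\ref{t:iso.quiver} then decides whether the two full (resp.\ reduced) $K$-webs are isomorphic.

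The main obstacle is not computational but definitional. One must verify against \cite{BoyleHuang} that the notion of isomorphism of $K$-webs used there is exactly that of isomorphism of $\mathbb Z$-representations of the underlying quiver, carrying no extra structure (such as compatibility with an ordering or a positive cone) that would have to be matched as well; and, for the reduced $K$-web, that the reduction is described by operations which preserve effective presentability (i.e.\ they take explicitly presented diagrams to explicitly presented diagrams). Once these compatibility points are confirmed, the corollary is immediate from Theorem~\ref{t:iso.quiver}; everything else --- the Smith-normal-form computations of cokernels and induced maps --- is routine.
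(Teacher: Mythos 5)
Your proposal is correct and matches the paper's (implicit) argument: the paper offers no written proof of this corollary, treating it as an immediate consequence of Theorem~\ref{t:iso.quiver} once one knows from \cite{BoyleHuang} that a full or reduced $K$-web is a finite, effectively computable diagram of homomorphisms of finitely generated abelian groups and that $K$-web isomorphism is exactly isomorphism of the corresponding $\Z$-representations. You have simply made explicit the effectivity checks (Smith normal form for the cokernels, computability of the induced maps and of the reduction) that the paper leaves to the reader.
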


In~\cite{BoyleHuang}, for matrices $A,B$ within a
subclass of $\mpnz$
sufficient to
address problems of stable isomorphism, it was shown that
two matrices  are $\glpnz$ equivalent if and only if
the reduced $K$-webs of $I-A$ and $I-B$ are isomorphic.
Thus
Corollary \ref{kwebglpisomorphism} gives an alternate route
to proving  Corollary \ref{stableisockdecidable}.

In \cite{BoyleHuang}, there is also a characterization of flow equivalence of
shifts of finite type in terms of more refined isomorphism
relations of reduced $K$-webs. We believe that the work
of Grunewald and Segal can also be applied to show decidability
of  isomorphisms of quiver representations satisfying such constraints.
We will not attempt this here.

\appendix
\section{The flow equivalence reduction} 

The paper \cite{mb:fesftpf}, with appeal to
\cite{BoyleHuang},
 reduced the problem of deciding flow equivalence of 
shifts of finite type to the problem of deciding whether two matrices
in $ M_{\mathcal P,\vec n}(\Z)$ are $SL_{\mathcal P,\vec n}(\Z)$-equivalent.
The relevant statements in \cite{mb:fesftpf} are given in terms
of infinite matrices; we will provide details for the  translation
to the finite matrix claim. 

\begin{definition} 
  Given $\vec m \leq \vec r$, we have a natural embedding
  $\iota_{ \vec r} = \iota: M_{\mathcal P,\vec m}(\Z) \to
  M_{\mathcal P,\vec r}(\Z) $
  as follows. For $\{i,j\}\subset \mathcal P$, the map $\iota$
  embeds the $ij$ block of $M$ as the upper left corner of the
  $ij$ block of $\iota M$. Outside the embedded upper left corner,
  the $ij$ block of $\iota M$ is zero if $i\neq j$ and agrees with the
  identity matrix if $i=j$.
\end{definition}

According to \cite[Section 3]{mb:fesftpf}, flow equivalence
of shifts of finite type is decidable if there is a procedure
to answer the following question. 
\begin{enumerate} 
\item \label{infinitecriterion}
  Suppose $B\in M_{\mathcal P,\vec m}(\Z)$ and $ B'\in M_{\mathcal P,\vec{m}'}(\Z) $,  
with  $m_i=1$ iff $ m'_i =1$.
Does  there exist $\vec r$, 
 with  $r_i =1$ iff $n_i=1$,
such that 
$\iota_{\vec r} B$ and $\iota_{\vec r}  B'$ are
$\slprz$ equivalent?
\end{enumerate} 
The matrices $B,B'$ of \eqref{infinitecriterion}
  correspond to the matrices $I-A$, $I-A'$
  in condition (2) of \cite[Theorem 3.4(2)]{mb:fesftpf}.
  Our statement with $\vec r$
  is a translation of the infinite matrix statement of
  that conditon (2). 

Define $\vec n = (n_1, \dots , n_N)$ by $ n_i = 1$ if $ m_i =1$, and
otherwise $ n_i = 2 + \max \{m_i,m'_i\}$.
We claim that \eqref{infinitecriterion} holds 
if and only if it holds  for $\vec r = \vec n$.

To prove the nontrivial implication in the claim, suppose  $\vec r$ satisfies 
\eqref{infinitecriterion}.
Without loss of generality, 
we may assume  $\vec r\geq \vec n$. 
If $\vec n_i < \vec r_i$,
then  the  entries of 
the $i$th diagonal block of $\iota_{\vec n}B$
(and likewise the entries of
the $i$th diagonal block of $\iota_{\vec n}B'$) 
have greatest common divisor equal to 1. 
Then the stabilization result \cite[Corollary 4.11]{BoyleHuang}
shows the $\slprz$ equivalence of $\iota_{\vec r} B$ and $\iota_{\vec r} B'$ 
guarantees the  $\slpnz$ equivalence of $\iota_{\vec n} B$ and $\iota_{\vec n} B'$.
The definition of $\vec n$ then guarantees the Factorization
Theorem \cite[Theorem 4.4]{mb:fesftpf}
applies to produce the positive equivalence
in condition (1) of \cite[Theorem 3.4]{mb:fesftpf}.

\bibliographystyle{alpha}
\bibliography{standard5}

\end{document}